\newtheorem{theorem}{Theorem}
\newtheorem{lemma}[theorem]{Lemma}
\theoremstyle{remark}
\newcommand{\mirror}[1]{{#1}^\updownarrow}
\newcommand{\R}[0]{\mathbb{R}}
\newcommand{\PP}[0]{\mirror{P}}
\newcommand{\A}[0]{\mirror{A}}
\newcommand{\N}[0]{\mathbb{N}}
\newcommand{\im}[0]{\mathrm{Im}}
\newcommand{\Cov}[1]{\mathrm{Cov}(#1)}
\newcommand{\Rng}[1]{\mathrm{Rng}(#1)}
\newcommand{\B}[0]{\mathcal{B}}
\newcommand{\CM}[0]{\mathbb{J}}
\title{Lower bound on the size of a quasirandom forcing set of permutations}
\author{Martin Kure\v{c}ka\footnote{Faculty of Informatics, Masaryk University, Botanick\'a 68A, 602 00 Brno, Czech Republic. E-mail: \texttt{485413@mail.muni.cz}. This research was supported by the MUNI Award in Science and Humanities of the Grant Agency of Masaryk University.
}}
\date{}
\begin{document}

\maketitle

\begin{abstract}
    A set $S$ of permutations is forcing if for any sequence $\{\Pi_i\}_{i \in \N}$ of permutations where the density $d(\pi,\Pi_i)$ converges to $\frac{1}{|\pi|!}$ for every permutation $\pi \in S$, it holds that $\{\Pi_i\}_{i \in \N}$ is quasirandom. Graham asked whether there exists an integer $k$ such that the set of all permutations of order $k$ is forcing; this has been shown to be true for any $k\ge 4$. In particular, the set of all twenty-four permutations of order $4$ is forcing. We provide the first non-trivial lower bound on the size of a forcing set of permutations: every forcing set of permutations (with arbitrary orders) contains at least four permutations.
\end{abstract}

\section{Introduction}
\label{sec:intro}
Random combinatorial structures play an important role in combinatorics and various computer science applications.
If a~large combinatorial structure shares key properties with a~truly random structure it is~said to be \emph{quasirandom}.
The~most studied is~the~theory of~\emph{quasirandom graphs},
which originated in the~seminal works of~R\"odl~\cite{Rod86}, Thomason~\cite{Tho87}, and
Chung, Graham and Wilson~\cite{ChuGW89} in the~1980s.
Graph quasirandomness is~captured by~several seemingly different but in fact equivalent conditions:
the~density of all subgraphs is~close to their expected density in a~random graph,
all but the~largest eigenvalue of the~adjacency matrix are~small,
the~density of a~graph is~uniformly distributed among its (linear size) subset of vertices,
all cuts between linear size subsets of vertices have the~same density, etc.
Besides graphs, there are~results on quasirandomness of many different types of combinatorial structures,
in particular,
tournaments~\cite{BucLSSXX,ChuG91,CorR17,HanKKMPSV19},
hypergraphs~\cite{ChuG90,Gow06,Gow07,HavT89,KohRS02},
set~systems~\cite{ChuG91s},
groups~\cite{Gow08}, and
subsets of integers~\cite{ChuG92}.
In this~paper, we will be concerned with quasirandomness of permutations as studied in~\cite{Coo04,KraP13,ChaKNPSV20}.

One of the~equivalent conditions mentioned above says that a~large graph is~quasirandom if and only if its edge density is~$1/2+o(1)$ and
the~density of cycles of length four is~$1/16+o(1)$. Hence, graph quasirandomness is~captured by~the~density of two specific subgraphs: $K_2$ and $C_4$.
More generally, the~Forcing Conjecture posed by~Conlon, Fox and Sudakov~\cite{ConFS10}
asserts that $C_4$ can be replaced by~any bipartite graph with at least one cycle.
Graham (see~\cite[page 141]{Coo04}) asked whether an analogous result is~true for permutations:
\emph{ Does there exists an integer $k$ such that a~(large) permutation is~quasirandom if and only if
the~density of every $k$-permutation is~$1/k!$\,?}
This~question was answered affirmatively by~Kr\'al' and Pikhurko~\cite{KraP13} by~establishing that any $k\ge 4$ has this~property;
we remark that the~answer is~negative for $k\in\{1,2,3\}$~\cite{CooP08}.
Equivalent results were established in statistics in relation to non-parametric independence tests
by~Yanagimoto~\cite{Yan70}, building on an older work by~Hoeffding~\cite{Hoe48}.
In this~context, we refer the~reader to the~work by~Even-Zohar and Leng~\cite{EveL19} on nearly linear time algorithm for counting small permutation occurrences, which can be used for fast implementation of these tests.

We are~interested in determining the~minimum size of a~set of permutations that captures permutation quasirandomness.
To state our results precisely, we need to introduce some definitions.
A~\emph{permutation of order $n$}, or briefly an \emph{$n$-permutation},
is~a~bijection from $\{1,\ldots,n\}$ to $\{1,\ldots,n\}$;
the~order of a~permutation $\Pi$ is~denoted by~$|\Pi|$.
If $A=\{a_1,\ldots,a_k\}\subseteq \{1,\ldots,n\}$, $a_1<\cdots<a_k$,
then the~subpermutation of $\Pi$ induced by~$A$
is~the~unique permutation $\pi$ of order $|A|=k$ such that
$\pi(i)<\pi(j)$ if and only if $\Pi(a_i)<\Pi(a_j)$.
Subpermutations are~often referred to as \emph{patterns}.
The~\emph{(pattern) density} of a~$k$-permutation $\pi$ in~an~ {$n$\nobreakdash-permutation} $\Pi$
is~the~probability $d(\pi,\Pi)$ that a~randomly chosen $k$-element subset of~$\{1,\ldots,n\}$
induces a~subpermutation equal to $\pi$;
if $k>n$, we set $d(\pi,\Pi)=0$.
We say that a~sequence $\{\Pi_i\}_{i \in \N}$ of~permutations is~\emph{quasirandom}
if for every permutation $\pi$ the~limit of~its~densities in the~sequence $\{\Pi_i\}_{i \in \N}$ converges and satisfies
\begin{equation}
    \label{eq:quasirandom}
    \lim_{i\to\infty}d(\pi,\Pi_i)=\frac{1}{\lvert\pi\rvert!}.
\end{equation}
Finally, we say that the~set $S$ of permutations is~\emph{forcing}
if any~sequence $\{\Pi_i\}_{i\in\N}$ satisfying the~equality \eqref{eq:quasirandom} for all $\pi \in S$ is quasirandom.
In particular, the~results of~\cite{KraP13,Yan70} imply that the~set of all $4$-permutations is~forcing.

A~natural question is~to determine the~minimum size of a~forcing set of permutations.
Inspecting the~proof given in~\cite{KraP13},
Zhang~\cite{Zha} observed that there exists a~$16$-element forcing set of $4$-permutations.
Bergsma~and Dassios~\cite{BerD14} identified an $8$-element forcing sets of $4$-permutations and
Chan et al.~\cite{ChaKNPSV20} found additional three $8$-element forcing sets of $4$\nobreakdash-permutations. In fact, these four $8$-element forcing sets $S$ of $4$-permutations satisfy an~even stronger property,
which is~called \emph{$\Sigma$-forcing},
i.e., a~sequence of permutations is~quasirandom if and only if
the~limit of the~sum of the~pattern densities of permutations in $S$ converges to $\lvert S\rvert/24$.
Our main results asserts that there is~no forcing set containing less than four permutations.
\begin{theorem}
\label{thm:main}
Every forcing set of permutations (of arbitrary, possibly different, orders) has at least four elements.
\end{theorem}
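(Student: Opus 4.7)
The plan is to show the contrapositive: for every set $S$ of at most three permutations, $S$ is not forcing. By the compactness of the space of permutons and the standard correspondence between quasirandom sequences and the uniform permuton, this reduces to constructing, for each such $S$, a non-uniform permuton $\mu$ with $d(\pi,\mu) = 1/|\pi|!$ for every $\pi \in S$; a sequence of finite permutations sampled from such a $\mu$ then witnesses the failure of forcing.

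To construct $\mu$, I would work within the family of step permutons parameterized by $n \times n$ doubly stochastic matrices $M$, for some $n \ge 3$ so that the Birkhoff polytope $B_n$ has dimension $(n-1)^2 \ge 4$. For any permutation $\pi$, the density $d(\pi,\mu_M)$ is a polynomial of degree $|\pi|$ in the entries of $M$ and takes the value $1/|\pi|!$ at the uniform matrix $M_0 = J_n/n$. The constraints $d(\pi,\mu_M) = 1/|\pi|!$ for $\pi \in S$ thus cut out a real algebraic subvariety $V \subseteq B_n$ passing through $M_0$.

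The key step is to show $V$ properly contains $\{M_0\}$. A first-order analysis is promising: each differential of a constraint at $M_0$ is a linear functional on the tangent space to $B_n$ (of dimension $(n-1)^2 \ge 4$), so the common kernel of the at most three such functionals is at least one-dimensional and supplies a perturbation direction preserving all constraints to first order. When the Jacobian of the constraint map has maximal rank $|S|$, the implicit function theorem then produces a smooth submanifold of $V$ of dimension $(n-1)^2 - |S| \ge 1$ through $M_0$, directly yielding the desired non-uniform witnesses.

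The main obstacle is the degenerate case in which the rank of the Jacobian at $M_0$ drops below $|S|$. Here the implicit function theorem does not apply, and the linearized dimension count can mislead, since in principle $M_0$ could be an isolated \emph{real} solution of the polynomial system (as happens for $x^2+y^2=0$ in $\R^2$). Overcoming this requires additional structure---either a finer analysis of the higher-order Taylor expansion of the density polynomials at $M_0$, or exploiting the dihedral group of symmetries of the unit square acting on permutons: invariance of $\mu$ under such a symmetry automatically enforces many pattern density equalities, enabling the explicit construction of non-uniform symmetric counterexamples when the first-order argument is insufficient. A careful case analysis on $|S| \in \{1,2,3\}$ and on the orders of the permutations in $S$ is likely needed to complete the argument.
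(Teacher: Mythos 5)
Your opening move coincides with the paper's: perturb the uniform permuton inside the family of step permutons, compute the gradients of the density functionals at the uniform matrix, and apply the implicit function theorem when those gradients are linearly independent (this is exactly the paper's Lemma~\ref{lma:grad}). However, everything after that point in your proposal is a placeholder rather than a proof, and the placeholder sits precisely where the entire difficulty of the theorem lives. You correctly identify that the argument can fail when the Jacobian drops rank, but you then only list possible remedies (``higher-order Taylor expansion,'' ``dihedral symmetries,'' ``a careful case analysis is likely needed'') without carrying any of them out. The theorem is not established by your text.

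For comparison, the paper does \emph{not} resolve the degenerate case by a higher-order expansion. Instead it shows that linear dependence of the gradients is itself an extremely restrictive combinatorial condition on $S$. It passes to the limit $n\to\infty$ and encodes each gradient as a two-variable \emph{gradient polynomial} $P_\pi(\alpha,\beta)$ whose coefficients are, up to explicit nonzero constants, the bilinear forms $\mathbf{b}_{i+2}^T A_\pi \mathbf{b}_{j+2}$ evaluated on the permutation matrix $A_\pi$ against a fixed family of alternating-binomial vectors. From a vanishing linear combination $\sum t_i P_{\pi_i}=0$ one extracts strong constraints on the cover matrix $\sum t_i A_{\pi_i}$ (it must be annihilated by the $\mathbf{b}_h$ on both sides, and must be constant when all orders agree). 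These constraints force: at least two permutations of maximal order; for a dependent triple containing a permutation of order at least $4$, the other two must both be $12$ and $21$, which reduces to an already-excluded two-element set; and the remaining case of all orders at most $3$ is handled by an explicit non-uniform permuton (a symmetric ``X-shaped'' measure $\mu_{\alpha_0}$) matching every pattern density of order up to $3$. None of this structural classification, nor the explicit small-order construction, appears in your proposal, so the gap is not a technical detail but the core of the argument.
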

The~proof of Theorem~\ref{thm:main} is~based on analyzing perturbations of a~truly random large permutation.
We present our argument using the~language of the~theory of combinatorial limits, which we briefly introduce in Section \ref{sec:preliminaries}.
In Section~\ref{sec:perturbation},
we establish that the~change of~the~density of~a~pattern after small perturbations
can be described by~a~certain polynomial for~each pattern (the~values of the~polynomial determine
the~gradient of the~density depending on the~location of the~perturbation) and
state a~sufficient condition for being non-forcing in terms of these polynomials.
In Section~\ref{sec:finish},
we show that every set with fewer than four permutations satisfies this~condition with the~exception of~a~few cases.
We then analyze these cases separately to conclude the~proof of Theorem~\ref{thm:main}.

\section{Preliminaries}
\label{sec:preliminaries}
In this section we define notation used in the rest of the paper and present some basic results on permutation limits.
The~set~of~all positive integers is denoted by~$\N$, the~set~of~all~nonnegative integers by $\N_0$, and for any $n\in \N$ the set $\{1, \ldots, n\}$ is denoted by $[n]$. We write $f:[k]\nearrow [n]$ for a non-decreasing function from $[k]$ to $[n]$.



The set of all real matrices of order $n \times m$ is denoted by $\R ^ {n \times m}$. The $i$-th row of a matrix $M$ is~denoted by $r_i(M)$.
A \emph{stochastic} matrix is a non-negative square matrix $M$ such that each of its columns sums to one.
If the same also holds for all its rows, we say that $M$ is \emph{doubly stochastic}. We use $\CM$ to denote the constant doubly stochastic matrix.
The order of $\CM$ will always be clear from context. For a $k$-permutation $\pi$, we define its \emph{permutation matrix} $A_\pi \in \R^{k \times k}$ by setting
\begin{align*}
A_{i,j} = \begin{cases}
    1 & \text{if } \pi(i) = j \text{ and}\\ 
    0 & \text{otherwise.}
\end{cases}
\end{align*}
Note that any permutation matrix is doubly stochastic. By formal linear combination of permutations we mean formal linear combination over real numbers. For any formal linear combination $t_1 \pi_1 + \ldots t_n \pi_n$ of permutations of equal orders, we define its \emph{cover matrix} as \[\Cov{t_1 \pi_1 + \ldots t_n \pi_n} = \sum_{i \in [m]} t_i A_{\pi_i}.\]

A \emph{permuton} is a limit object describing convergent sequences of permutations.
Formally, a~\emph{permuton} $\mu$ is~a~Borel measure on $[0,1]^2$ that has uniform marginals, i.e., postcompositions with both projections are uniform measures.
The notion of~induced subpermutations introduced in Section \ref{sec:intro} can be generalized to any set of points $P = \{(x_1, y_1), \ldots, (x_k, y_k)\}$ such that $x_1 < \ldots < x_k$ and all the $y$-coordinates are pairwise distinct:
for such a set $P$ we call the unique permutation $\pi$ satisfying
\[\pi(i) < \pi(j) \Leftrightarrow y_i < y_j\]
the permutation \emph{induced} by the points $P$. If $k$ points are sampled from $\mu$, they have distinct $x$ and $y$ coordinates with probability one (since $\mu$ has uniform marginals) and therefore they induce a $k$-permutation.
For any $k$\nobreakdash-permutation $\pi$, the probability that a random $k$\nobreakdash-permutation obtained from this sampling is equal to $\pi$ is called the density of $\pi$ in $\mu$ and denoted by $d(\pi, \mu)$.
For example, the uniform Borel measure $\lambda$ on $[0,1]^2$ is a permuton and it holds $d(\pi, \lambda) = \frac{1}{k!}$ for all $k$-permutations $\pi$; in fact, $\lambda$ is the only permuton with this property.

We associate a doubly stochastic square matrix $M$ of order $n$ with a \emph{step permuton} $\mu[M]$ as follows: for a Borel set $X$, the measure of $X$ is 
\begin{align*}
    \mu[M](X) = \sum_{i,j \in [n]}n M_{i,j} \cdot \lambda \left( X \cap \left[\frac{i-1}{n}, \frac{i}{n}\right] \times \left[\frac{j-1}{n}, \frac{j}{n}\right]\right)
\end{align*}
where $\lambda$ is the uniform measure. A straightforward computation leads to an explicit formula for the density of a $k$-permutation $\pi$ in the step permuton $\mu[M]$:

\begin{align}
\label{eq:step-density}
d(\pi, \mu[M]) = \frac{k!}{n^k} \sum_{f,g: [k] \nearrow [n]} \frac{1}{\prod_{i \in [n]} |f^{-1}(i)|!|g^{-1}(i)|!} \times \prod_{m \in [k]} M_{f(m), g(\pi(m))}
\end{align}

Finally, a result from the theory of combinatorial limits yields the following

\begin{lemma}
A nonempty finite set of permutations $S$ is \emph{forcing} if and only if the the uniform permuton is the only permuton $\mu$ satisfying $d(\pi, \mu) = \frac{1}{|\pi|!}$ for any $\pi \in S$.
\end{lemma}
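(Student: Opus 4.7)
The plan is to prove both directions of the equivalence using the standard sampling correspondence between convergent sequences of permutations and permutons, together with compactness of the space of permutons. I will take as given two facts from the theory of combinatorial limits that are presumably available in the background: first, that every permuton $\mu$ is a density-limit of some sequence of permutations (e.g., obtained by sampling increasingly many points from $\mu$); and second, that the space of permutons is sequentially compact under convergence in density (so every sequence of permutations has a subsequence that converges in density to some permuton). I will also use the already-noted fact that the uniform permuton $\lambda$ is the unique permuton with $d(\pi,\lambda) = 1/|\pi|!$ for every permutation $\pi$.

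For the forward direction, I assume $S$ is forcing and let $\mu$ be any permuton with $d(\pi,\mu)=1/|\pi|!$ for all $\pi\in S$. Approximate $\mu$ by a sequence of permutations $\{\Pi_i\}_{i\in\N}$ with $d(\sigma,\Pi_i)\to d(\sigma,\mu)$ for every permutation $\sigma$. In particular, $d(\pi,\Pi_i)\to 1/|\pi|!$ for every $\pi\in S$, so the forcing hypothesis gives $d(\sigma,\Pi_i)\to 1/|\sigma|!$ for every permutation $\sigma$. Passing to the limit, $d(\sigma,\mu)=1/|\sigma|!$ for every $\sigma$, and the uniqueness fact forces $\mu=\lambda$.

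For the backward direction, I assume $\lambda$ is the unique permuton satisfying $d(\pi,\mu)=1/|\pi|!$ for $\pi\in S$, and I take a sequence $\{\Pi_i\}_{i\in\N}$ with $d(\pi,\Pi_i)\to 1/|\pi|!$ for all $\pi\in S$. Suppose for contradiction that this sequence is not quasirandom: then there is some permutation $\tau$ and some $\varepsilon>0$ such that $|d(\tau,\Pi_i)-1/|\tau|!|\ge\varepsilon$ along an infinite subsequence. By compactness, this subsequence admits a further subsequence that converges in density to some permuton $\mu$. The limit $\mu$ still satisfies $d(\pi,\mu)=1/|\pi|!$ for every $\pi\in S$, so by assumption $\mu=\lambda$; but then $d(\tau,\mu)=1/|\tau|!$, contradicting the separation along the subsequence.

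The only nonroutine ingredient is the compactness/sampling background, which is standard and is what is meant by ``a result from the theory of combinatorial limits'' in the statement; with that in hand there is no real obstacle. If one wanted a fully self-contained treatment, the subtlety would lie in verifying that the space of permutons is sequentially compact in the density topology and that every permuton is a density-limit of finite permutations, but since the lemma explicitly invokes this theory I would simply cite it and proceed as above.
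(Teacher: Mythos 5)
Your proof is correct and is precisely the standard compactness/sampling argument from the theory of permutation limits that the paper invokes by citation (it gives no proof of its own, deferring to \cite{KraP13}). Both directions are handled properly, and the two background facts you isolate (every permuton is a density-limit of permutations, and sequential compactness of the permuton space) are exactly the ingredients the cited theory supplies.
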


For further details we refer the reader to \cite{KraP13}.

\section{Perturbing the uniform permuton}
\label{sec:perturbation}
In this section, we develop tools for analysing small perturbations of the uniform permuton.
First, we describe a method for perturbing a step permuton and formulate a sufficient condition for a set of permutations to be non-forcing.
Then, we introduce a so-called gradient polynomial which captures the behaviour of perturbations of step permutons
as the order of underlying matrices goes to infinity, and reformulate our sufficient condition in terms of gradient polynomials.
Finally, two different presentations of gradient polynomials are given as they are both needed in specific lemmas.

Fix an integer $n > 1$ and let $k, l \in [n-1]$. We define the matrix $B^{k,l} \in \R^{n \times n}$ by setting
$$
B^{k,l}_{i,j} = \begin{cases}
    1 & \text{if either } i=k \text{ and } j=l \text{, or } i=k+1 \text{ and } j=l+1 \text{,}\\
    -1 & \text{if either } i=k+1 \text{ and } j=l \text{, or } i=k \text{ and } j=l+1 \text{, and}\\
    0 & \text{otherwise.}
\end{cases}
$$
See {Figure~\ref{fig:perturb}(a)} for an example.
Further, for a matrix $x \in \R^{ (n-1) \times (n - 1)}$, we define \[\CM^{x} = \CM + \sum_{1 \leq i,j < n} x_{i,j}B^{i,j}.\]
To simplify the notation we will freely interchange matrices of order ${(n-1) \times (n-1)}$ with vectors of length $(n-1)^2$ obtained by concatenating rows of the matrix.
Note that the matrix $B^{k,l}$ is non-zero only on a $2 \times 2$ submatrix, and the sum of all its rows and columns is zero.
Thus, for any $x \in [-\frac{1}{4n}, \frac{1}{4n}]^{(n-1) \times (n-1)}$, the matrix $\CM^x$ is doubly stochastic, and therefore it gives rise to a step permuton; see Figure~{\ref{fig:perturb}(b)} for an example.
In particular, if $x$ is the zero vector, the permuton $\mu[\CM^x]$ is the uniform permuton.

\begin{figure}
\centering
\begin{tabular}{cc}

\begin{tabular}[t]{c}
\includegraphics[height=4cm]{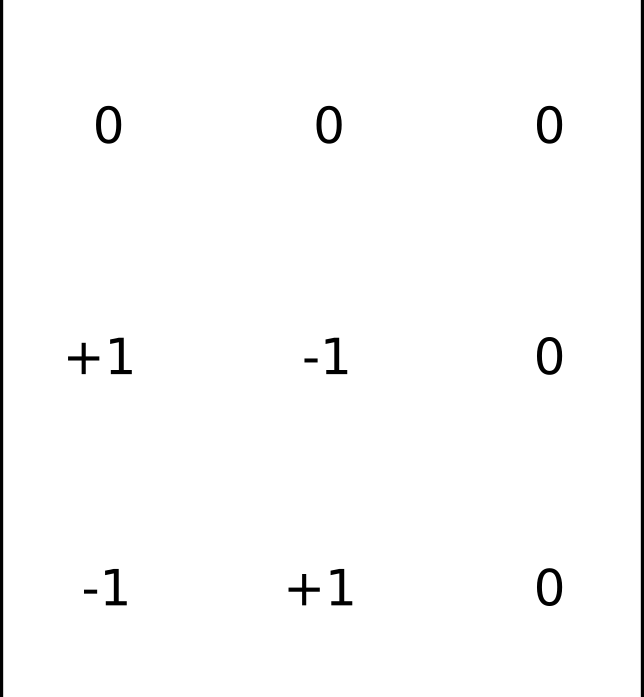}\\
\small{(a)} The matrix $B^{2,1}$.
\end{tabular}&

\begin{tabular}[t]{c}
\includegraphics[height=4cm]{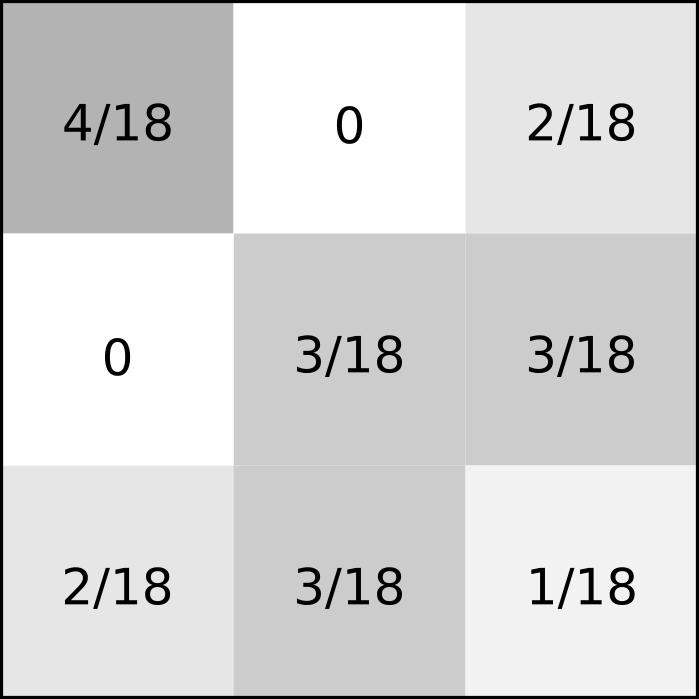}\\
\small{(b)} Measure of individual tiles\\ of the permuton $\mu[\CM^{(1/3,0,0,-1/6)}]$.
\end{tabular}
\end{tabular}

\caption{}
\label{fig:perturb}
\end{figure}

For a permutation $\pi$ we define the \emph{density function} $h_{\pi, n} : \R^{(n-1) \times (n-1)} \rightarrow \R$ where $h_{\pi, n}(x) = d\left(\pi, \mu\left[ \CM^x\right]\right)$.
We wish to analyse permutons $\mu[\CM^x]$ for $x$ close to the zero vector.
In particular, our goal is to find a non-zero $x$ such that the densities of permutations from $S$ in $\mu[\CM^x]$ are the same as in $\mu[\CM]$, i.e., in the uniform permuton.
In the next lemma, we~show that if the gradients of the density functions of the permutations in $S$ satisfy certain conditions,
we are able to find such $x$.

\begin{lemma}
\label{lma:grad}
Let $S$ be a non-empty finite set of permutations. If there exists $n \in \N$ such that $(n-1)^2 > |S|$ and the gradients $\nabla h_{\pi, n}(0,\ldots,0), \pi \in S$, are linearly independent, then $S$ is not forcing.
\end{lemma}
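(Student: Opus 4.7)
The plan is to find a small non-zero perturbation $x \in [-\frac{1}{4n}, \frac{1}{4n}]^{(n-1) \times (n-1)}$ for which $h_{\pi,n}(x) = h_{\pi,n}(0) = 1/|\pi|!$ simultaneously for every $\pi \in S$. Any such $x$ yields a doubly stochastic matrix $\CM^x$, and hence a step permuton $\mu[\CM^x]$ whose pattern densities on $S$ match those of the uniform permuton. Provided the resulting permuton is not itself uniform, the characterization recalled at the end of Section~\ref{sec:preliminaries} then forces $S$ to be non-forcing.

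To produce such an $x$, assemble the relevant density functions into a single map $H : \R^{(n-1) \times (n-1)} \to \R^{|S|}$ given by $H(x) = (h_{\pi,n}(x))_{\pi \in S}$. By the explicit expansion~\eqref{eq:step-density}, each coordinate of $H$ is a polynomial in the entries of $\CM^x$, and therefore a polynomial in $x$, so $H$ is smooth. The Jacobian $\mathrm{D}H(0)$ has the gradients $\nabla h_{\pi,n}(0)$ as its rows and, by hypothesis, has rank $|S|$; in particular it is surjective onto $\R^{|S|}$. Since the domain of $H$ has dimension $(n-1)^2 > |S|$, the implicit function theorem (equivalently the submersion theorem) guarantees that a neighbourhood of $0$ in the level set $H^{-1}(H(0))$ is a smooth submanifold of positive dimension $(n-1)^2 - |S|$. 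It therefore contains a sequence of non-zero points converging to $0$, which can be assumed to satisfy $\|x\|_\infty \leq \frac{1}{4n}$.

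It remains to verify that such an $x$ yields a permuton different from $\mu[\CM]$. The matrices $\{B^{k,l} : 1 \leq k,l \leq n-1\}$ are linearly independent in $\R^{n \times n}$: reading off the $(k,l)$-entry of any vanishing linear combination in lexicographic order forces each coefficient to vanish. Hence $x \neq 0$ implies $\CM^x \neq \CM$, and since the measure of the tile $[\frac{i-1}{n}, \frac{i}{n}] \times [\frac{j-1}{n}, \frac{j}{n}]$ under $\mu[M]$ equals $M_{i,j}/n$, distinct doubly stochastic matrices of order $n$ yield distinct step permutons. The main step is the application of the implicit function theorem, but it is applied in exactly the setting it was designed for: smoothness and surjectivity of $\mathrm{D}H(0)$ are both immediate from the setup, and no genuine difficulty is anticipated.
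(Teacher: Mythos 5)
Your proof is correct and follows essentially the same route as the paper: both apply the implicit function theorem (in your case in its submersion form) to the density functions at the origin to locate a non-zero perturbation $x$ with $\|x\|_\infty\le\frac{1}{4n}$ lying in the common level set, and then invoke the permuton characterization of forcing. If anything, your write-up is slightly more careful than the paper's, since you explicitly verify that the matrices $B^{k,l}$ are linearly independent and that distinct doubly stochastic matrices give distinct step permutons, so that $x\neq 0$ really does produce a non-uniform permuton.
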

\begin{proof}
Choose indices $i_1, \ldots, i_{|S|}$ such that the gradient vectors $\nabla h_{\pi, n}(0,\ldots,0), \pi \in S$, restricted to these indices are linearly independent.
Note that by the assumption of the lemma, the inequality $(n-1)^2 > |S|$ holds, and so the gradient vectors have at least $|S|+1$ coordinates. 
Let $i_{|S|+1}$ be any index different from $i_1, \ldots, i_{|S|}$.
Define a function $f: \R^{|S|+1} \rightarrow \R^{(n-1) \times (n-1)}$ such that $f(x)_{i_k} = x_k$ for $k \in [|S|+1]$ and $f(x)_i=0$ otherwise.

The gradients ${\nabla (h_{\pi, n}\circ f)(0,\ldots, 0)}$ are also linearly independent, hence we can apply the Implicit Function Theorem for $h_{\pi, n}\circ f$ at the point $(0, \ldots, 0)$.
The theorem yields a~continuous function $g: \R \to \R^{|S|+1}$ defined on $(-\varepsilon, \varepsilon)$ for some $\varepsilon > 0$,
such that $g(r)_{|S|+1} = r$, $h_{\pi, n}(f(g(r))) = h_{\pi, n}(0, \ldots, 0)$ and $\Rng{g} \subseteq [-\frac{1}{4n}, \frac{1}{4n}]^{|S|+1}$.
Recall that $\mu[\CM^x]$ is the uniform permuton if $x = (0, \ldots, 0)$. Therefore $h_{\pi, n}(f(g(x)))$ equals to $\frac{1}{|\pi|!}$ for any ${\pi \in S}$ and any ${x \in (-\varepsilon, \varepsilon)}$.
In particular, $\CM^{g(\varepsilon/2)}$ is a non-uniform permuton that witnesses that $S$ is not forcing.
\end{proof}

As the number of parts of a step permuton increases, the probability that two randomly chosen points share the same part of a step permuton tends to zero. This simplifies the~analysis of gradients significantly and leads us to defining the \textit{gradient polynomial} of~a~permutation $\pi$ --- a limit object which captures the behaviour of the gradients $\nabla h_{\pi,n}$ as $n$ tends infinity. The gradient polynomial $P_\pi(\alpha, \beta)$ is defined as the unique polynomial in two variables which satisfies the equality
\begin{align*}
P_\pi(\alpha, \beta) = \lim_{n \rightarrow \infty} n^3 \frac{\partial}{\partial x_{\lfloor \alpha n \rfloor, \lfloor \beta n \rfloor}}h_{\pi, n}(0, \ldots, 0)
\end{align*}
for any $\alpha, \beta \in (0, 1)$. In the following lemmas we show that the limit always exists, indeed yields a polynomial and we provide an explicit formula for its coefficients.\\

The gradient vector $\nabla h_{\pi, n}(0,\ldots, 0)$ of any permutation $\pi$ can be calculated by a straightforward differentiation of (\ref{eq:step-density}) (see page \pageref{eq:step-density}). In particular the following holds for any positive integer $n > 1$ and $i, j \in [n]$
\begin{align}
\label{eq:grad}
\frac{\partial}{\partial x_{i,j}} h_{\pi, n}(0, \ldots, 0) = \frac{k!}{n^{2k-1}} \sum_{f,g:[k]\nearrow [n]} \frac{1}{\prod_{m\in[n]}|f^{-1}(m)|!|g^{-1}(m)|!} \times \sum_{m \in [k]}  B^{i,j}_{f(m), g(\pi(m))}.
\end{align}

We first show that it is possible to restrict the sum \eqref{eq:grad} to injective functions when considering the limit.

\begin{lemma}
\label{lma:poly-injective-formula}
For any $k$-permutation and $\alpha, \beta \in (0,1)$, the following equality holds if~any of~the~two~limits exists
$$\lim_{n \rightarrow \infty}n^3\frac{\partial}{\partial x_{\lfloor \alpha n \rfloor,\lfloor \beta n \rfloor}} h_{\pi, n}(0, \ldots, 0)
= \lim_{n \rightarrow \infty}\frac{k!}{n^{2k-4}} \sum_{ \substack{f,g:[k]\nearrow [n] \\ f,g \text{ injective}}} \sum_{m \in [k]}  B^{\lfloor \alpha n \rfloor,\lfloor \beta n \rfloor}_{f(m), g(\pi(m))}.$$
\end{lemma}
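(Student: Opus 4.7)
The plan is to start from formula~\eqref{eq:grad}, set $i=\lfloor\alpha n\rfloor$ and $j=\lfloor\beta n\rfloor$ (both lie in $[n-1]$ for $n$ large enough, since $\alpha,\beta\in(0,1)$), multiply both sides by $n^3$, and compare with the right-hand side of the claim. The multiplication yields
\[
n^3\frac{\partial}{\partial x_{i,j}} h_{\pi,n}(0,\ldots,0) \;=\; \frac{k!}{n^{2k-4}}\sum_{f,g:[k]\nearrow[n]} \frac{1}{\prod_{m\in[n]}|f^{-1}(m)|!\,|g^{-1}(m)|!}\sum_{m\in[k]} B^{i,j}_{f(m),g(\pi(m))}.
\]
When $f$ and $g$ are both strictly increasing, every factorial $|f^{-1}(m)|!\,|g^{-1}(m)|!$ equals $1$, and the corresponding part of the sum is precisely the expression on the right-hand side of the claimed identity. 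Hence it suffices to show that the contribution of pairs $(f,g)$ with at least one of them non-injective is $o(1)$ as $n\to\infty$; this automatically gives equality of the two limits whenever either exists, since a vanishing difference preserves convergence.

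To bound the non-injective part I will use that the factorial weight is always at most $1$ and that $\lvert\sum_{m\in[k]} B^{i,j}_{f(m),g(\pi(m))}\rvert\le k$. Crucially, this sum vanishes unless there exists some $m\in[k]$ with $f(m)\in\{i,i+1\}$ and $g(\pi(m))\in\{j,j+1\}$, because $B^{i,j}$ has support inside the $2\times 2$ block indexed by these pairs. So it suffices to count non-decreasing pairs $(f,g)$ satisfying this compatibility condition and having at least one of $f,g$ non-injective.

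The key counting estimate is the following: for any fixed $m_0\in[k]$ and $c\in[n]$, the number of non-decreasing $f:[k]\to[n]$ with $f(m_0)=c$ is $O(n^{k-1})$, and the same count restricted to non-injective $f$ is $O(n^{k-2})$. I will prove this by parametrizing $f$ via its sorted set $V$ of $r=|f([k])|$ distinct values together with a composition of $k$ into $r$ positive parts, which gives $\binom{n}{r}\binom{k-1}{r-1}=O(n^r)$ functions in total; the constraint $c\in V$ (with $m_0$ in the block of $c$) removes one factor of $n$, and the further restriction $r\le k-1$ removes another. Applying this to both $f$ and $g$, summing over the $k$ choices of $m$ and the four $(a,b)\in\{i,i+1\}\times\{j,j+1\}$, and using the bound $O(n^{k-2})\cdot O(n^{k-1})$ for pairs with at least one of $f,g$ non-injective, the total count is $O(n^{2k-3})$. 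Combined with the bounded weight and bounded $B$-sum and divided by $n^{2k-4}$, the non-injective contribution is $O(1/n)\to 0$, which completes the argument.

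The main obstacle is the counting estimate above: one must make precise the heuristic that each additional constraint on a non-decreasing function $[k]\to[n]$ (fixing a value, or forbidding injectivity) costs a factor of $n$, uniformly over the composition structure. Once that is cleanly established, everything else is routine bookkeeping exploiting the sparse, sign-alternating structure of $B^{i,j}$.
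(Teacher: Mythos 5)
There is a genuine gap, and it sits exactly at the final arithmetic of your counting argument. Your estimate of the number of relevant pairs $(f,g)$ with at least one non-injective is $O(n^{k-2})\cdot O(n^{k-1})=O(n^{2k-3})$, which is correct as a count; but dividing by $n^{2k-4}$ gives $O(n^{2k-3}/n^{2k-4})=O(n)$, not $O(1/n)$ as you claim. So the bound you obtain for the non-injective contribution diverges rather than vanishes. Moreover, this is not merely a loose estimate that could be tightened by more careful bookkeeping within your framework: the sum of the \emph{absolute values} $|S_n(f,g)|$ over the pairs you isolate (e.g.\ $f$ non-injective with a repeated value far from $\lfloor\alpha n\rfloor$ and exactly one preimage of $\lfloor\alpha n\rfloor$, $g$ injective hitting $\lfloor\beta n\rfloor$) really is of order $n^{2k-3}$, so the triangle inequality alone cannot close the argument. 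The smallness of the non-injective contribution comes from sign cancellation, which your support condition does not capture.

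The missing idea is a cancellation step before taking absolute values. The paper pairs each $f$ with the function $\tilde f$ obtained by swapping the preimages of $i_n=\lfloor\alpha n\rfloor$ and $i_n+1$; since $B^{i_n,j_n}_{i_n,\ell}=-B^{i_n,j_n}_{i_n+1,\ell}$ and the factorial weights are unchanged, one gets $S_n(f,g)+S_n(\tilde f,g)=0$ whenever $\im(f)$ misses at least one of $i_n,i_n+1$ (and symmetrically for $g$ with $j_n,j_n+1$). This reduces the non-injective sum to pairs where \emph{both} $i_n,i_n+1\in\im(f)$ and \emph{both} $j_n,j_n+1\in\im(g)$, i.e.\ two fixed values in each image rather than one; that costs an extra factor of $n$ for each of $f$ and $g$, bringing the count down to $O(n^{2k-5})$ and the contribution to $O(1/n)\to 0$. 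Your setup (formula \eqref{eq:grad}, restriction to the support of $B^{i,j}$, the $O(n^{r})$ parametrization by image-plus-composition) is all sound and matches the paper; you need to insert this pairing argument before estimating by absolute values.
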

\begin{proof}
Let $i_n = \lfloor \alpha n \rfloor$, $j_n = \lfloor \beta n \rfloor$, and let $S_n(f, g)$ denote the summand
\[\frac{1}{\prod_{m\in[n]}|f^{-1}(m)|!|g^{-1}(m)|!} \times \sum_{m \in [k]} B^{i_n,j_n}_{f(m), g(\pi(m))}\]
from \eqref{eq:grad}. We first show that the following limit is equal to zero:
\begin{equation}
\label{eq:limit-sum}
L = \lim_{n \rightarrow \infty} \left| \frac{k!}{n^{2k-4}}  \sum_{ \substack{f,g:[k]\nearrow [n] \\ f \text{ or } g \text{ non-injective}}} S_n(f,g) \right|
\end{equation}
Fix $n$.
For each non-injective function $f:[k]\nearrow[n]$ we define a function $\tilde{f}$ as follows:
\begin{align*}
    \tilde{f}(a) = \begin{cases}
    i_n & \text{if } a \in f^{-1}(i_n+1)\\
    i_n+1 & \text{if } a \in f^{-1}(i_n)\\
    f(a) & \text{otherwise}
    \end{cases}
\end{align*}
for any $a \in [k]$. Intuitively, $\tilde{f}$ swaps preimages of $i_n$ and $i_n+1$.
Note that whenever $\im(f)$ does not contain at least one of $i_n$ or $i_n+1$,
then $S_n(f,g) + S_n(\tilde{f}, g) = 0$ for any $g$ since $B^{i_n, j_n}_{i_n,k} = -B^{i_n, j_n}_{i_n+1,k}$ for any $k$. Hence, the sum in \eqref{eq:limit-sum} can be restricted to such terms $S_n(f,g)$ where both $i_n$ and $i_n+1$ are in the range of $f$. A similar argument can be used to exclude those functions $g$ which do not have $j_n$ or $j_n+1$ in their range.

The absolute value of each of the terms $S_n(f, g)$ can be further bounded independently on $n$ by $0 \leq |S_n(f, g)| \leq k$. Therefore, we get

\begin{align*}
0 \leq L = & \lim_{n \rightarrow \infty} \left| \frac{k!}{n^{2k-4}}  \sum_{ \substack{f,g:[k]\nearrow [n] \\
 f \text{ or } g \text{ non-injective}\\
 \lfloor \alpha n \rfloor, \lfloor \alpha n \rfloor+1 \in \im{(f)} \\
 \lfloor \beta n \rfloor, \lfloor \beta n \rfloor+1 \in \im{(g)} \\
 }} S_n(f,g) \right|
 \leq \lim_{n \rightarrow \infty}  \frac{k!}{n^{2k-4}}  \sum_{ \substack{f,g:[k]\nearrow [n] \\
 f \text{ or } g \text{ non-injective}\\
 \lfloor \alpha n \rfloor, \lfloor \alpha n \rfloor+1 \in \im{(f)} \\
 \lfloor \beta n \rfloor, \lfloor \beta n \rfloor+1 \in \im{(g)} \\
 }} \left| S_n(f,g) \right|\\
\leq & \lim_{n \rightarrow \infty} \frac{k \cdot k!}{n^{2k-4}} \left| \left\{ (f,g) \middle| \substack{f,g: [k]\nearrow [n], f \text{ or } g \text{ non-injective}\\
\lfloor \alpha n \rfloor, \lfloor \alpha n \rfloor+1 \in \im{(f)},
\lfloor \beta n \rfloor, \lfloor \beta n \rfloor+1 \in \im{(g)}}
\right\} \right| = \lim_{n \rightarrow \infty} \frac{k \cdot k!}{n^{2k-4}} \, O(n^{2k-5}) = 0.
\end{align*}
Hence, the limit $L$ is indeed equal to zero.
Finally, note that if $f,g:[k]\nearrow[n]$ are injective,
then the product $\prod_{l\in[n]}|f^{-1}(l)|!|g^{-1}(l)|!$ is equal to one.
So if any of the limits from the~statement exist, it holds that
\begin{align*}
    0 &= \lim_{n \rightarrow \infty} \frac{k!}{n^{2k-4}}  \sum_{ \substack{f,g:[k]\nearrow [n] \\ f \text{ or } g \text{ non-injective}}} \frac{1}{\prod_{l\in[n]}|f^{-1}(l)|!|g^{-1}(l)|!} \times \sum_{m \in [k]} B^{\lfloor \alpha n \rfloor,\lfloor \beta n \rfloor}_{f(m), g(\pi(m))}\\
    & = \lim_{n \rightarrow \infty} \left( n^3\frac{\partial}{\partial x_{i,j}} h_{\pi, n}(0, \ldots, 0)
- \frac{k!}{n^{2k-4}} \sum_{ \substack{f,g:[k]\nearrow [n] \\ f,g \text{ injective}}} \sum_{m \in [k]}  B^{\lfloor \alpha n \rfloor,\lfloor \beta n \rfloor}_{f(m), g(\pi(m))} \right)\\
    & = \lim_{n \rightarrow \infty}n^3\frac{\partial}{\partial x_{i,j}} h_{\pi, n}(0, \ldots, 0)
- \lim_{n \rightarrow \infty}\frac{k!}{n^{2k-4}} \sum_{ \substack{f,g:[k]\nearrow [n] \\ f,g \text{ injective}}} \sum_{m \in [k]}  B^{\lfloor \alpha n \rfloor,\lfloor \beta n \rfloor}_{f(m), g(\pi(m))}.
\end{align*}
The statement of the lemma follows.
\end{proof}

Using Lemma \ref{lma:poly-injective-formula}, we find an explicit formula for gradient polynomials. Note that the~formula indeed defines a polynomial.

\begin{lemma}
For any $k$-permutation $\pi$, the gradient polynomial is well-defined and is equal to the following formula:
\begin{align}
\begin{split}
\label{eq:poly-formula}
    P_\pi(\alpha, \beta) = & k! \sum_{m \in [k]}\left(  \frac{k-m}{1 - \alpha} - \frac{m-1}{\alpha} \right)\left( \frac{k-\pi(m)}{1 - \beta} - \frac{\pi(m)-1}{\beta} \right) \\
& \, \frac{\alpha^{m-1}(1 - \alpha)^{k-m}\beta^{\pi(m)-1}(1 - \beta)^{k-\pi(m)}}{(m-1)!(k-m)!(\pi(m)-1)!(k-\pi(m))!}
\end{split}
\end{align}
\end{lemma}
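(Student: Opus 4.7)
The plan is to start from the reformulation provided by Lemma~\ref{lma:poly-injective-formula}, which replaces the partial derivative by a sum over pairs of strictly increasing injections. Since $B^{i_n,j_n}$ is supported on the $2\times 2$ submatrix with rows $\{i_n,i_n+1\}$ and columns $\{j_n,j_n+1\}$, where $i_n=\lfloor\alpha n\rfloor$ and $j_n=\lfloor\beta n\rfloor$, for every $m\in[k]$ the entry $B^{i_n,j_n}_{f(m),g(\pi(m))}$ depends on $f$ only through the value $f(m)$ and on $g$ only through $g(\pi(m))$. Writing $N_f(a)=\binom{a-1}{m-1}\binom{n-a}{k-m}$ for the number of injective $f\colon[k]\nearrow[n]$ with $f(m)=a$, and $N_g(b)=\binom{b-1}{\pi(m)-1}\binom{n-b}{k-\pi(m)}$ for the analogous count, the sign pattern $+,-,-,+$ of $B^{i,j}$ makes the contribution of a fixed $m$ factor as
\[\bigl(N_f(i_n)-N_f(i_n+1)\bigr)\bigl(N_g(j_n)-N_g(j_n+1)\bigr).\]

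The main calculation is then an asymptotic expansion of each of these first differences. Applying Pascal's identity to both factors of $N_f(i+1)=\binom{i}{m-1}\binom{n-i-1}{k-m}$ yields the exact identity
\[N_f(i)-N_f(i+1) = \binom{i-1}{m-1}\binom{n-i-1}{k-m-1} - \binom{i-1}{m-2}\binom{n-i}{k-m} + \binom{i-1}{m-2}\binom{n-i-1}{k-m-1}.\]
With $i=i_n$ and $n\to\infty$, the first two summands are asymptotic to $\frac{n^{k-2}\alpha^{m-1}(1-\alpha)^{k-m-1}}{(m-1)!(k-m-1)!}$ and $\frac{n^{k-2}\alpha^{m-2}(1-\alpha)^{k-m}}{(m-2)!(k-m)!}$ respectively, while the third is $O(n^{k-3})$. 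Factoring out the common quantity $\frac{\alpha^{m-1}(1-\alpha)^{k-m}}{(m-1)!(k-m)!}$ gives
\[N_f(i_n)-N_f(i_n+1)=\frac{n^{k-2}\alpha^{m-1}(1-\alpha)^{k-m}}{(m-1)!(k-m)!}\left(\frac{k-m}{1-\alpha}-\frac{m-1}{\alpha}\right)+o(n^{k-2}),\]
and an identical computation with $\beta$ and $\pi(m)$ in place of $\alpha$ and $m$ handles $N_g(j_n)-N_g(j_n+1)$. The conventions $\binom{i-1}{-1}=\binom{n-i-1}{-1}=0$ take care of the boundary values $m\in\{1,k\}$ automatically, matching the vanishing of $(m-1)/\alpha$ or $(k-m)/(1-\alpha)$ in the target formula.

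Multiplying the two expansions produces a quantity of order $n^{2k-4}$, so the prefactor $k!/n^{2k-4}$ from Lemma~\ref{lma:poly-injective-formula} yields a finite limit, and summing over $m\in[k]$ gives exactly \eqref{eq:poly-formula}. This simultaneously establishes that the defining limit of $P_\pi$ exists and that it equals the stated expression. To confirm that \eqref{eq:poly-formula} is genuinely a polynomial, it suffices to observe that every apparent singularity is cancelled: the factor $(m-1)/\alpha$ is paired with $\alpha^{m-1}$, the factor $(k-m)/(1-\alpha)$ with $(1-\alpha)^{k-m}$, and the analogous cancellations occur in $\beta$, so after expanding each summand one obtains a sum of monomials in $\alpha$ and $\beta$.

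The primary technical obstacle is the cancellation underlying the factor $N_f(i_n)-N_f(i_n+1)$: because the leading-order terms of $N_f(i_n)$ and $N_f(i_n+1)$ (both of order $n^{k-1}$) coincide and must cancel before the relevant order $n^{k-2}$ appears, one cannot argue directly from the leading asymptotics of individual binomial coefficients. The Pascal-identity rewrite above circumvents this by presenting the difference as a sum of binomial products whose leading orders no longer cancel, making the subsequent asymptotic uniform in $m$ and yielding the clean closed form.
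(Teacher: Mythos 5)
Your proposal is correct and follows essentially the same route as the paper: both start from Lemma~\ref{lma:poly-injective-formula}, group the sum by $m$ so that each term factors as $\bigl(F[m\mapsto i]-F[m\mapsto i+1]\bigr)\bigl(F[\pi(m)\mapsto j]-F[\pi(m)\mapsto j+1]\bigr)$ with $F[m\mapsto i]=\binom{i-1}{m-1}\binom{n-i}{k-m}$, and then extract the order-$n^{k-2}$ asymptotics of each first difference. The only (cosmetic) difference is that you resolve the leading-order cancellation via Pascal's identity and term-by-term asymptotics, whereas the paper factors the difference exactly as a rational prefactor times $\binom{i-1}{m-1}\binom{n-i-1}{k-m}$ before passing to the limit; both yield the same expression.
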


\begin{proof}
Fix $\alpha, \beta \in (0,1)$. We introduce $i_n = \lfloor \alpha n \rfloor$ and $j_n = \lfloor \beta n \rfloor$. We omit the subscript whenever the index is clear from the context. By Lemma \ref{lma:poly-injective-formula}, the following equality holds whenever the right hand side exists

\begin{align*}
P_\pi(\alpha, \beta) = \lim_{n \rightarrow \infty} \frac{k!}{n^{2k-4}} \sum_{ \substack{f,g:[k]\nearrow [n] \\ f,g \text{ injective}}} \sum_{m \in [k]}  B^{i,j}_{f(m), g(\pi(m))}.
\end{align*}
Let $F_n[m \mapsto i]$ denote the number of strictly increasing functions $f:[k] \nearrow [n]$ satisfying $f(m) = i$. We can group the summands by $m$ to obtain
\begin{align*}
& \frac{k!}{n^{2k-4}} \sum_{m \in [k]} \sum_{ \substack{f,g:[k]\nearrow [n] \\ f,g \text{ injective}}}   B^{i,j}_{f(m), g(\pi(m))}\\
= &\frac{k!}{n^{2k-4}} \sum_{m \in [k]} F[m \mapsto i]F[\pi(m)\mapsto j] + F[m \mapsto i+1]F[\pi(m)\mapsto j+1]\\
& - F[m\mapsto i+1]F[\pi(m)\mapsto j] - F[m\mapsto i]F[\pi(m)\mapsto j+1]\\
= &\frac{k!}{n^{2k-4}} \sum_{m \in [k]} \left(F[m \mapsto i]-F[m \mapsto i+1]\right)\left(F[\pi(m)\mapsto j]-F[\pi(m)\mapsto j+1]\right).
\end{align*}
Note that for $k \leq i \leq n-k$ the equality $F[m \mapsto i] = {i-1 \choose m-1}{n-i \choose k-m}$ holds. Then, we can further simplify the sum using the following:
\begin{align}
\label{eq:F}
\begin{split}
  & F[m \mapsto i]-F[m \mapsto i+1] = {i-1 \choose m-1}{n-i \choose k-m}-{i \choose m-1}{n-i-1 \choose k-m} \\
= & \left(\frac{n-i}{n-i-k+m}-\frac{i}{i-m+1}\right){i-1 \choose m-1}{n-i-1 \choose k-m}.
\end{split}
\end{align}

Note that for any $\alpha \in (0,1)$ there exist $N$ such that for any $n \geq N$ it holds $k \leq \lfloor \alpha n \rfloor \leq n-k$ and thus we can always use (\ref{eq:F}) in the following limit:
\begin{align*}
&\lim_{n \rightarrow \infty} \frac{1}{n^{k-2}} \left(F[m \mapsto \lfloor \alpha n \rfloor] - F[m \mapsto \lfloor \alpha n \rfloor + 1] \right)\\
= &\lim_{n \rightarrow \infty}\frac{1}{n^{k-2}} \left(\frac{n-\lfloor \alpha n \rfloor}{n-\lfloor \alpha n \rfloor-k+m}-\frac{\lfloor \alpha n \rfloor}{\lfloor \alpha n \rfloor-m+1}\right){\lfloor \alpha n \rfloor-1 \choose m-1}{{n-\lfloor \alpha n \rfloor-1} \choose k-m}\\
= &\lim_{n \rightarrow \infty}\frac{1}{n^{k-2}} \left(\frac{k-m}{n-\lfloor \alpha n \rfloor-k+m}-\frac{m-1}{\lfloor \alpha n \rfloor-m+1}\right){\lfloor \alpha n \rfloor-1 \choose m-1}{{{n}-\lfloor \alpha n \rfloor-1} \choose k-m}\\
= & \left( \frac{k-m}{1-\alpha} - \frac{m-1}{\alpha} \right)\frac{\alpha^{m-1}(1-\alpha)^{k-m}}{(m-1)!(k-m)!}.
\end{align*}
Similarly we compute

\begin{align*}
&\lim_{n \rightarrow \infty} \frac{1}{n^{k-2}} \left(F[\pi(m) \mapsto \lfloor \beta n \rfloor] - F[\pi(m) \mapsto \lfloor \beta n \rfloor + 1] \right)\\
= &\left( \frac{k-\pi(m)}{1-\beta} - \frac{\pi(m)-1}{\beta}\right)\frac{\beta^{\pi(m)-1}(1-\beta)^{k-\pi(m)}}{(\pi(m)-1)!(k-\pi(m))!}.
\end{align*}
By multiplying these two limits we obtain the equality from the statement of the lemma.
\end{proof}

We next provide an analogy of Lemma \ref{lma:grad} for gradient polynomials.

\begin{lemma}
\label{lma:grad_poly}
Let $S = \{\pi_1, \ldots, \pi_m\}$ be a non-empty set of permutations. If the gradient polynomials $P_\pi , \pi \in S$, are linearly independent, then $S$ is not forcing.
\end{lemma}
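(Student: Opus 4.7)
The plan is to reduce to Lemma \ref{lma:grad}: I would show that linear independence of the gradient polynomials $P_{\pi_1}, \ldots, P_{\pi_m}$ transfers, for all sufficiently large $n$, to linear independence of the gradient vectors $\nabla h_{\pi_i, n}(0,\ldots,0) \in \R^{(n-1)^2}$, so that the earlier lemma can then be invoked directly.

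First I would establish the following standard fact: since $P_{\pi_1}, \ldots, P_{\pi_m}$ are linearly independent polynomial functions on $(0,1)^2$, there exist pairwise distinct points $(\alpha_1,\beta_1),\ldots,(\alpha_m,\beta_m) \in (0,1)^2$ such that the $m \times m$ matrix $E$ with entries $E_{i,j} = P_{\pi_i}(\alpha_j,\beta_j)$ is non-singular. Indeed, if no such points existed, the evaluation vectors $\bigl(P_{\pi_1}(\alpha,\beta),\ldots,P_{\pi_m}(\alpha,\beta)\bigr)$, as $(\alpha,\beta)$ ranges over $(0,1)^2$, would span a proper subspace of $\R^m$, yielding a nonzero $(c_1,\ldots,c_m)$ with $\sum_i c_i P_{\pi_i} \equiv 0$ on $(0,1)^2$; a polynomial vanishing on a nonempty open set is identically zero, contradicting linear independence. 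Distinctness of the chosen points is automatic, since coinciding points would produce identical columns in $E$.

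Next, by the defining property of the gradient polynomial, for each pair $(i, j)$ the quantity
\[
n^3 \frac{\partial h_{\pi_i,n}}{\partial x_{\lfloor \alpha_j n \rfloor,\lfloor \beta_j n \rfloor}}(0,\ldots,0)
\]
converges to $P_{\pi_i}(\alpha_j,\beta_j)$ as $n \to \infty$. Hence the $m \times m$ matrix $E^{(n)}$ whose $(i,j)$ entry is the above expression converges entrywise to $E$, and by continuity of the determinant $E^{(n)}$ is non-singular for all sufficiently large $n$. For such $n$, the restrictions of the vectors $\nabla h_{\pi_i,n}(0,\ldots,0)$ to the $m$ coordinates indexed by $(\lfloor \alpha_j n \rfloor, \lfloor \beta_j n \rfloor)$, $j=1,\ldots,m$, are linearly independent; for this to make sense we need these $m$ coordinates to be pairwise distinct and to lie in $[n-1]\times[n-1]$, both of which follow from the distinctness and strict positivity of the $(\alpha_j,\beta_j)$ once $n$ is taken large enough. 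Linear independence of restrictions immediately implies linear independence of the full gradient vectors.

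To conclude, I would pick $n$ additionally large enough that $(n-1)^2 > m = |S|$ and apply Lemma \ref{lma:grad}, which gives that $S$ is not forcing. The only substantive step is the first one, separating linearly independent polynomials by finitely many point evaluations; this is a standard observation, and the remainder is just continuity of the determinant combined with the defining limit of $P_\pi$.
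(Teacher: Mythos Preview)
Your argument is correct. Both your proof and the paper's reduce to Lemma~\ref{lma:grad} via the defining limit of the gradient polynomial, but the two proceed in opposite directions. The paper argues the contrapositive: assuming $S$ is forcing, Lemma~\ref{lma:grad} forces the gradient vectors $\nabla h_{\pi_i,n}(0,\ldots,0)$ to be linearly dependent for every large $n$; normalizing the coefficient vectors to lie on the unit cube, a compactness argument extracts a subsequential limit $(t_1,\ldots,t_m)\neq 0$, and then the limit defining $P_{\pi_i}$ is used pointwise to conclude $\sum_i t_i P_{\pi_i}\equiv 0$. You instead work directly: linearly independent polynomials can be separated by finitely many evaluation points, and continuity of the determinant pushes the nonsingularity of the evaluation matrix down to the finite-$n$ gradient vectors for all large $n$. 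Your route is slightly more constructive (it actually exhibits the coordinates certifying independence), while the paper's compactness argument avoids having to check the bookkeeping that the indices $(\lfloor\alpha_j n\rfloor,\lfloor\beta_j n\rfloor)$ are distinct and in range. Both are short and equally valid.
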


\begin{proof}
We prove the contrapositive. Suppose that $S$ is forcing, and for any $n>1$ define $g_i^n = \nabla h_{\pi_i, n}(0,\ldots, 0)$. Lemma \ref{lma:grad} yields that the gradients $g_i^n , i \in [m],$ are linearly dependent for any $n > m + 1$. Therefore, for~any such~$n$ there exists a non-zero tuple of reals $t^n = (t_1^n, \ldots, t_m^n)$ such that $\sum_{i \in {m}} t_i^n g_i^n = (0, \ldots, 0)$.
Moreover, without loss of generality, we can assume $||t^n||_{\infty} = 1$. Since $[-1,1]^m$ is~a~compact set, there exists a convergent subsequence $\{(t_1^{n_j}, \ldots, t_m^{n_j}) \}_{j \in \N}$ converging to~a~non-zero tuple $(t_1, \ldots, t_m)$. Hence for any $\alpha, \beta \in (0,1)$, it holds
\begin{align*}
    \sum_{i \in [m]} t_i P_{\pi_i}(\alpha, \beta) = \sum_{i \in [m]} \lim_{j \rightarrow \infty} t_i^{n_j} \lim_{j \rightarrow \infty} {n_j^3} (g_i^{n_j})_{\lfloor \alpha {n_j} \rfloor, \lfloor \beta {n_j} \rfloor} = \lim_{j \rightarrow \infty} \sum_{i \in [m]} t_i^{n_j} {n_j^3} (g_i^{n_j})_{\lfloor \alpha {n_j} \rfloor, \lfloor \beta {n_j} \rfloor} = 0.
\end{align*}
Therefore the gradient polynomials are linearly dependent since it holds $\sum_{i \in [m]}t_i P_{\pi_i} = 0$.
\end{proof}

For the analysis of gradient polynomials, we use the following kind of vectors. For~an~integer $a\in[k]$ we define a vector $\mathbf{b}^k_a \in \R^k$ as follows:
$$
\left(\mathbf{b}^k_a\right)_i = \begin{cases}
(-1)^{i-1} {a - 1 \choose i-1} & \text{for } 1 \leq i \leq a\\
0 & \text{otherwise}.\\
\end{cases}
$$
For example \[\mathbf{b}_5^5 = (1, -4, 6, -4, 1)^T \text{ and } \mathbf{b}_4^5 = (1, -3, 3, -1, 0)^T.\] We sometimes omit the upper index and write just $\mathbf{b}_a$ when the dimension is clear from~the~context.

Let us denote the linear span of vectors $\mathbf{b}^k_2, \ldots, \mathbf{b}^k_k$ by $\B$ and let $\mathbf{j} = (1,\ldots,1)^T \in \R^k$. Observe that $\B$  is the orthogonal complement of the vector $\mathbf{j}$. Indeed
\[ \langle \mathbf{j} , \mathbf{b}_i \rangle = \sum_{\ell=1}^i (-1)^{\ell-1}{i-1 \choose \ell-1} = 0.\]
Also observe that the all vectors $\mathbf{b}_2^k, \ldots, \mathbf{b}_k^k$ are linearly independent.
In particular, the vectors $\mathbf{j}, \mathbf{b}_2^k, \ldots, \mathbf{b}_k^k$ form a~basis of $\R^k$.\\

The next lemma provides an explicit formula for the coefficients of the gradient polynomials. Let $P(\alpha, \beta)$ be a polynomial in $\alpha, \beta$.
For any $i,j \in \N_0$, we denote the \emph{coefficient} of the monomial $\alpha^i\beta^j$ in $P$ by $c_{i,j}(P)$, i.e., it holds that \[P = \sum_{i,j \in \N_0} c_{i,j}(P)\alpha^i\beta^j.\]

\begin{lemma}
\label{lma:coef}
Let $\pi$ be a $k$-permutation, $P_\pi(\alpha, \beta)$ its gradient polynomial, and $i,j \in \N_0$. Then it holds
\begin{align*}
c_{i,j}(P_\pi) = \frac{k!(-1)^{i+j}}{i!j!(k-i-2)!(k-j-2)!} \left( \mathbf{b}_{i+2}^T \, A_\pi \, \mathbf{b}_{j+2} \right)
\end{align*}
if both $i$ and $j$ are at most $k-2$, and $c_{i,j}(P_\pi)=0$ otherwise.
\end{lemma}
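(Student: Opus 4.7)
\medskip

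\noindent\textbf{Proof plan.}
The strategy is to recognize the formula for $P_\pi(\alpha,\beta)$ as a bilinear form in the matrix $A_\pi$, and then identify the expansion coefficients of its two ``factors'' with the vectors $\mathbf{b}^k_{i+2}$.

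First, I would separate the $\alpha$- and $\beta$-dependent parts. Define, for $m \in [k]$,
\begin{equation*}
q_m(\alpha) \;:=\; \left(\frac{k-m}{1-\alpha} - \frac{m-1}{\alpha}\right)\frac{\alpha^{m-1}(1-\alpha)^{k-m}}{(m-1)!(k-m)!},
\end{equation*}
so that the explicit formula \eqref{eq:poly-formula} reads $P_\pi(\alpha,\beta) = k!\sum_{m\in[k]}q_m(\alpha)\,q_{\pi(m)}(\beta)$. Since the matrix $A_\pi$ has entry $1$ exactly at positions $(m,\pi(m))$ and zeros elsewhere, this can be rewritten as
\begin{equation*}
P_\pi(\alpha,\beta) \;=\; k!\sum_{m,m'\in[k]} (A_\pi)_{m,m'}\, q_m(\alpha)\, q_{m'}(\beta) \;=\; k!\cdot \mathbf{q}(\alpha)^T A_\pi\,\mathbf{q}(\beta),
\end{equation*}
where $\mathbf{q}(\alpha)$ is the column vector with entries $q_1(\alpha),\ldots,q_k(\alpha)$.

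Next, I would extract the coefficients of $q_m(\alpha)$ in the monomial basis. The key observation is that $q_m(\alpha)$ is the normalized derivative of the Bernstein-type monomial $\alpha^{m-1}(1-\alpha)^{k-m}$, namely
\begin{equation*}
q_m(\alpha) \;=\; -\frac{1}{(m-1)!(k-m)!}\,\frac{d}{d\alpha}\bigl[\alpha^{m-1}(1-\alpha)^{k-m}\bigr],
\end{equation*}
which one verifies by a direct product-rule computation. Expanding $(1-\alpha)^{k-m}$ by the binomial theorem and differentiating term by term, I would show that for $0\le i\le k-2$ the coefficient of $\alpha^i$ in $q_m(\alpha)$ equals
\begin{equation*}
[\alpha^i]\,q_m(\alpha) \;=\; \frac{(-1)^i}{i!(k-i-2)!}\cdot (-1)^{m-1}\binom{i+1}{m-1} \;=\; \frac{(-1)^i}{i!(k-i-2)!}\,\bigl(\mathbf{b}^k_{i+2}\bigr)_m,
\end{equation*}
using the definition of $\mathbf{b}^k_{i+2}$ (the binomial coefficient vanishes for $m>i+2$, matching the support of $\mathbf{b}^k_{i+2}$), and that the coefficient is $0$ for $i>k-2$ because $q_m$ is a polynomial of degree at most $k-2$.

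Finally, I would plug the expansion into the bilinear form and collect the coefficient of $\alpha^i\beta^j$:
\begin{equation*}
c_{i,j}(P_\pi) \;=\; k!\sum_{m,m'\in[k]}(A_\pi)_{m,m'}\cdot \frac{(-1)^i\bigl(\mathbf{b}^k_{i+2}\bigr)_m}{i!(k-i-2)!}\cdot \frac{(-1)^j\bigl(\mathbf{b}^k_{j+2}\bigr)_{m'}}{j!(k-j-2)!},
\end{equation*}
which rearranges exactly to the claimed formula, while for $i>k-2$ or $j>k-2$ the vanishing follows from the degree of $q_m$. The main obstacle is the sign and binomial bookkeeping in identifying the coefficient of $\alpha^i$ in $q_m(\alpha)$ with the entry $(\mathbf{b}^k_{i+2})_m$; this is routine but requires careful tracking of the shifts $m-1 \mapsto i$ coming from differentiation. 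Once that identification is established, the remainder is purely algebraic assembly.
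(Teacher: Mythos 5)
Your proposal is correct and follows essentially the same route as the paper: both extract the coefficient of $\alpha^i$ in the single-variable factor via binomial expansion, identify it (up to the normalization $(-1)^i/(i!(k-i-2)!)$) with the entry $(\mathbf{b}^k_{i+2})_m$, and assemble the result as the bilinear form $\mathbf{b}_{i+2}^T A_\pi \mathbf{b}_{j+2}$. Your observation that $q_m$ is the negative normalized derivative of the Bernstein monomial $\alpha^{m-1}(1-\alpha)^{k-m}$ is a pleasant repackaging of the paper's first algebraic step (and handles the boundary cases $m=1$ and $m=k$ automatically), but it does not change the substance of the argument.
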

\begin{proof}

For this proof, we set $1/a!= 0$ whenever $a<0$, and ${a \choose b}=0$ whenever $a < b$.
In order to determine the coefficient of $\alpha^i\beta^j$ in $P_\pi$, we need to compute the coefficient of each summand from \eqref{eq:poly-formula} (see page \pageref{eq:poly-formula})and sum them up.
For any $m \in [k]$, the coefficient of $\alpha^i\beta^j$ is the product of the coefficient of $\alpha^i$ in
$\left(\frac{k-m}{1 - \alpha} - \frac{m-1}{\alpha} \right)\frac{\alpha^{m-1}(1 - \alpha)^{k-m}}{(m-1)!(k-m)!}$
and the coefficient of $\beta^j$ in
$\left(\frac{k-\pi(m)}{1 - \beta} - \frac{\pi(m)-1}{\beta} \right)\frac{\beta^{\pi(m)-1}(1 - \beta)^{k-\pi(m)}}{(\pi(m)-1)!(k-\pi(m))!}.$
We first compute the coefficient of $\alpha^i$:
\begin{align*}
    & \left(\frac{k-m}{1 - \alpha} - \frac{m-1}{\alpha} \right)\frac{\alpha^{m-1}(1 - \alpha)^{k-m}}{(m-1)!(k-m)!} \\
    = & \frac{\alpha^{m-1}(1 - \alpha)^{k-m-1}}{(m-1)!(k-m-1)!} - \frac{\alpha^{m-2}(1 - \alpha)^{k-m}}{(m-2)!(k-m)!}  \\
    = & \sum_{\ell=0}^{k-m-1} {k-m-1 \choose \ell} \frac{\alpha^{m-1+\ell}(-1)^\ell}{(m-1)!(k-m-1)!} - \sum_{\ell'=0}^{k-m} {k-m \choose \ell'} \frac{\alpha^{m-2+\ell'}(-1)^{\ell'}}{(m-2)!(k-m)!}.
\end{align*}
The last equality is just an expansion of $(1-\alpha)^{k-m-1}$ and $(1-\alpha)^{k-m}$.
The $i$-th power of $\alpha$ appears for $l = i+1-m$ and $l' = i+2-m$.
This yields that the coefficient of $\alpha^i$ is
\begin{align*}
    &{k-m-1 \choose i+1-m} \frac{(-1)^{i+1-m}}{(m-1)!(k-m-1)!} - {k-m \choose i+2-m} \frac{(-1)^{i-m}}{(m-2)!(k-m)!}\\
    = &{i \choose m-1} \frac{1}{(k-i-2)!i!}(-1)^{i+1-m} + {i \choose m-2} \frac{1}{(k-i-2)!i!}(-1)^{i+1-m}\\
    = &{i+1 \choose m-1} \frac{1}{(k-i-2)!i!}(-1)^{i+1-m}.
\end{align*}
Note that if $m > i+2$, the formula is equal to zero. Similarly, the coefficient of $\beta^j$ in $\left(\frac{k-\pi(m)}{1 - \beta} - \frac{\pi(m)-1}{\beta} \right)\frac{\beta^{\pi(m)-1}(1 - \beta)^{k-\pi(m)}}{(\pi(m)-1)!(k-\pi(m))!}$ is equal to $${j+1 \choose \pi(m)-1} \frac{1}{(k-j-2)!j!}(-1)^{j+1-\pi(m)}.$$
Hence, the coefficient of $\alpha^i\beta^j$ is the following:
\begin{align*}
c_{i,j}(P_\pi) = & k!\sum_{m \in [k]}{i+1 \choose m-1} \frac{(-1)^{i+1-m}}{(k-i-2)!i!}{j+1 \choose \pi(m)-1} \frac{(-1)^{j+1-\pi(m)}}{(k-j-2)!j!}\\
= & \frac{k!(-1)^{i+j}}{i!j!(k-i-2)!(k-j-2)!}\sum_{m \in [k]}(-1)^{m-1}{i+1 \choose m-1}(-1)^{\pi(m)-1}{j+1 \choose \pi(m)-1}\\
= & \frac{k!(-1)^{i+j}}{i!j!(k-i-2)!(k-j-2)!} \left( {\mathbf{b}^T_{i+2}} \, A_\pi \, \mathbf{b}_{j+2} \right).
\end{align*}
\end{proof}
In the following we use
\[K^k_{i,j} = \frac{k!(-1)^{i+j}}{i!j!(k-i-2)!(k-j-2)!},\]
and we omit the upper index when the index is clear from the context. Thus, it holds that
${c_{i,j}(P_\pi) = K_{i,j} \left( \mathbf{b}_{i+2}^T \, A_\pi \, \mathbf{b}_{j+2} \right)}$.\\

Finally we define the \emph{mirror gradient polynomial} $\PP_\pi(\alpha, \beta)$ by setting
\[\PP_\pi(\alpha, \beta) = P_\pi(1-\alpha, \beta).\]
As shown above, the coefficient $c_{i,j}(P_\pi)$ depends on the ``top'' rows of the matrix $A_\pi$, i.e., the first $i+2$ rows.
For any matrix $M \in \R^{n \times m}$ we define its row mirror image $\mirror{M}$ where $\mirror{M}_{i, j} = M_{n-i+1, j}$.
In the next lemma we prove that $\PP_\pi$ behaves in a similar way as $P_\pi$ but its coefficients depend on the ``top'' $i+2$ rows of the matrix $\mirror{A_\pi}$ instead. 
\begin{lemma}
Let $\pi$ be a $k$-permutation, $\PP_\pi(\alpha, \beta)$ its mirror gradient polynomial, and $i,j \in \N_0$. Then the following holds
$$c_{i,j}(\PP_\pi) = \frac{k!(-1)^{i+j+1}}{i!j!(k-i-2)!(k-j-2)!} \left( \mathbf{b}^T_{i+2} \, \A_\pi \, \mathbf{b}_{j+2} \right) = -K_{i,j} \left( {\mathbf{b}^T_{i+2}} \, \A_\pi \, \mathbf{b}_{j+2} \right)$$
if both $i$ and $j$ are at most $k-2$, and $c_{i,j}(\PP_\pi) = 0$ otherwise.
\end{lemma}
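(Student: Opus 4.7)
The plan is to relate the mirror gradient polynomial $\PP_\pi$ to the ordinary gradient polynomial of an auxiliary permutation, so that the previous lemma can be applied directly. Specifically, define the permutation $\mirror{\pi}$ by $\mirror{\pi}(m)=\pi(k-m+1)$. From the definition of row mirror image, one checks immediately that $A_{\mirror{\pi}}=\A_\pi$: the entry $(\A_\pi)_{i,j}=(A_\pi)_{k-i+1,j}$ is $1$ iff $\pi(k-i+1)=j$, i.e.\ iff $\mirror{\pi}(i)=j$. The key claim is then
\[\PP_\pi(\alpha,\beta)=-P_{\mirror{\pi}}(\alpha,\beta),\]
from which the lemma follows by applying the coefficient formula of the previous lemma to $P_{\mirror{\pi}}$ and substituting $A_{\mirror{\pi}}=\A_\pi$; the vanishing of $c_{i,j}(\PP_\pi)$ when $i>k-2$ or $j>k-2$ is inherited from the corresponding vanishing for $P_{\mirror{\pi}}$.

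To prove the claim, I would start from formula \eqref{eq:poly-formula} for $P_\pi(\alpha,\beta)$ and substitute $\alpha\mapsto 1-\alpha$. This swaps $\alpha$ with $1-\alpha$ in every factor involving $\alpha$, leaving the $\beta$-factors untouched. Then I would re-index the summation by setting $m'=k-m+1$, so that $m'$ also ranges over $[k]$. Under this substitution one has $k-m=m'-1$, $m-1=k-m'$, $\alpha^{m-1}(1-\alpha)^{k-m}$ (after the earlier $\alpha$-swap) becomes $\alpha^{m'-1}(1-\alpha)^{k-m'}$, the factorial factors $(m-1)!(k-m)!$ become $(k-m')!(m'-1)!$, and crucially $\pi(m)=\pi(k-m'+1)=\mirror{\pi}(m')$, so the $\beta$-factors become exactly the ones appearing in the formula for $P_{\mirror{\pi}}$.

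The only non-trivial bookkeeping is the sign. After the substitution and re-indexing, the $\alpha$-dependent rational factor reads
\[\frac{m'-1}{\alpha}-\frac{k-m'}{1-\alpha}=-\left(\frac{k-m'}{1-\alpha}-\frac{m'-1}{\alpha}\right),\]
which produces an overall factor of $-1$ and reassembles the expression into $-P_{\mirror{\pi}}(\alpha,\beta)$. This is the step I expect to be the main (though still routine) obstacle: making sure that the indexing substitution lines up every term of \eqref{eq:poly-formula} with the corresponding term for $\mirror{\pi}$ and that the lone sign flip comes from this single rational factor. Once that is verified, the previous lemma gives $c_{i,j}(P_{\mirror{\pi}})=K_{i,j}(\mathbf{b}^T_{i+2}A_{\mirror{\pi}}\mathbf{b}_{j+2})=K_{i,j}(\mathbf{b}^T_{i+2}\A_\pi\mathbf{b}_{j+2})$, and negating yields the formula in the statement.
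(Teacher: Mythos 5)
Your proposal is correct, and it reorganizes the argument in a cleaner way than the paper does. The paper also starts by substituting $1-\alpha$ into \eqref{eq:poly-formula}, but then it redoes the entire coefficient extraction: it expands the new $\alpha$-factor, computes the coefficient of $\alpha^i$ as ${i+1 \choose k-m}\frac{(-1)^{i-k+m+1}}{(k-i-2)!\,i!}$, and only at the very end re-indexes the sum over $m$ by $\ell=k-m+1$ to recognize the bilinear form $\mathbf{b}_{i+2}^T\,\A_\pi\,\mathbf{b}_{j+2}$. You instead perform the re-indexing $m'=k-m+1$ at the level of the polynomial itself, which packages the whole computation into the single identity $\PP_\pi=-P_{\mirror{\pi}}$ with $\mirror{\pi}(m)=\pi(k-m+1)$, and then imports the coefficient formula from the previous lemma as a black box via $A_{\mirror{\pi}}=\A_\pi$. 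I checked the bookkeeping: under $m'=k-m+1$ the monomial and factorial factors match term by term with those of $P_{\mirror{\pi}}$, and the single sign flip indeed comes only from the rational $\alpha$-factor, so $\PP_\pi=-P_{\mirror{\pi}}$ holds and the stated formula (including the vanishing for $i>k-2$ or $j>k-2$) follows. Your version buys modularity — no binomial-coefficient manipulation is repeated — at the small cost of introducing the auxiliary permutation $\mirror{\pi}$; the paper's version is self-contained but duplicates the algebra of the preceding proof.
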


\begin{proof}
We perform similar steps as in the previous proof. By the definition of the mirror polynomial, we can substitute $1-\alpha$ into $\ref{eq:poly-formula}$ to obtain

\begin{align*}
\begin{split}
    \mirror{P}_\pi(\alpha, \beta) = & k! \sum_{m \in [k]}\left(  \frac{k-m}{\alpha} - \frac{m-1}{1-\alpha} \right)\left( \frac{k-\pi(m)}{1 - \beta} - \frac{\pi(m)-1}{\beta} \right) \\
& \, \frac{(1-\alpha)^{m-1}\alpha^{k-m}\beta^{\pi(m)-1}(1 - \beta)^{k-\pi(m)}}{(m-1)!(k-m)!(\pi(m)-1)!(k-\pi(m))!}.
\end{split}
\end{align*}

Again, we split each summand into a product of two parts, one depending only  on $\alpha$ and the other on $\beta$.
The part involving $\beta$ is the same as in the previous proof. A straightforward computation analogous to the one in the proof of the previous lemma yields that the coefficient of $\alpha ^ i$ in $\left(\frac{k-m}{\alpha} - \frac{m-1}{1 - \alpha} \right)\frac{(1-\alpha)^{m-1}\alpha^{k-m}}{(m-1)!(k-m)!}$ is
\begin{align*}
{m-1 \choose i+m-k+1} \frac{(-1)^{i+m-k+1}}{(m-1)!(k-m-1)!} - {m-2 \choose i+m-k} \frac{(-1)^{i+m-k}}{(m-2)!(k-m)!}.
\end{align*}
This can be further simplified to
\begin{align*}
{i+1 \choose k-m} \frac{1}{(k-i-2)!i!}(-1)^{i-k+m+1}.
\end{align*}

Hence, the coefficient $c_{i,j}(\PP_\pi)$ is equal to
\begin{align*}
 &\frac{k!(-1)^{i+j}}{i!j!(k-i-2)!(k-j-2)!}\sum_{m \in [k]}(-1)^{m-k+1}{i+1 \choose k-m}(-1)^{\pi(m)+1}{j+1 \choose \pi(m)-1}\\
 &\text{where we can substitute $l = k - m + 1$ and reverse the order of summation to obtain}\\
 &\frac{k!(-1)^{i+j}}{i!j!(k-i-2)!(k-j-2)!}\sum_{\ell \in [k]}(-1)^{-\ell}{i+1 \choose k-(k-\ell+1)}(-1)^{\pi(k-\ell+1)+1}{j+1 \choose \pi(k-\ell+1)-1}\\
 =&\frac{k!(-1)^{i+j+1}}{i!j!(k-i-2)!(k-j-2)!}\sum_{\ell \in [k]}(-1)^{\ell-1}{i+1 \choose \ell-1}(-1)^{\pi(k-\ell+1)-1}{j+1 \choose \pi(k-\ell+1)-1}\\
= & \frac{k!(-1)^{i+j+1}}{i!j!(k-i-2)!(k-j-2)!} \left( {\mathbf{b}^k_{ i+2}}^T \, \A_\pi \, \mathbf{b}^k_{j+2} \right).
\end{align*}

\end{proof}

\section{Sets of linearly dependent polynomials}
\label{sec:finish}
In this section we prove our main result. We call the set $S$ of permutations \textit{linearly dependent} if the gradient polynomials of the permutations in the set $S$ are linearly dependent.
In the previous section, we have shown that any forcing set of permutations is linearly dependent.
We next establish two lemmas that describe general properties of cover matrices of dependent sets of permutations with respect to orders of their permutations.
This will render many triples of permutations to be non-forcing. We then identify
 all linearly dependent sets of size three and prove none of them is forcing.

Recall that the cover matrix of a formal linear combination of $k$-permutations $\omega = {\sum_{i \in [m]} t_i \pi_i}$ is the matrix $\Cov{\omega} = \sum_{i \in [m]} t_i A_{\pi_i}$.
For a dependent set of permutations $S$, the next lemma states a property of  the cover matrix of the permutations with the largest order in $S$.

\begin{lemma}
\label{lma:zero-sums}
Let $\pi_1, \ldots, \pi_m$ be permutations and $t_1, \ldots, t_m$ be reals such that $\sum_{i \in [m]} t_i P_{\pi_i} = 0$ and set $k = max\{|\pi_1|, \ldots, |\pi_m|\}$.
Suppose that $\pi_1, \ldots, \pi_n$ are all the permutations from $S$ with order $k$.
Further, let $2 \leq h \leq k$ be any integer such that the order of all the remaining permutations $\pi_{n+1}, \ldots, \pi_m$ is at most $h-1$.
Let $\omega =  t_1 \pi_1 + \ldots + t_n \pi_n$.
Then the following holds:

\[
\Cov{\omega} \, \mathbf{b}_{h} = (0,\ldots,0)^T \text{ \emph{and}    } \mathbf{b}^T_{h} \, \Cov{\omega} = (0,\ldots,0).
\]
\end{lemma}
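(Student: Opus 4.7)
The plan is to read off both claimed identities from the hypothesis $\sum_{i\in[m]} t_i P_{\pi_i} = 0$ by matching individual coefficients in $\alpha,\beta$, using the explicit formula $c_{i,j}(P_\pi) = K^k_{i,j}(\mathbf{b}_{i+2}^T A_\pi \mathbf{b}_{j+2})$ established in the previous lemma. The central observation is that, for a permutation $\pi_s$ of order $\ell \leq h-1$, the lemma forces $c_{h-2,j}(P_{\pi_s}) = 0$ and $c_{i,h-2}(P_{\pi_s}) = 0$ for every $i,j$, simply because the degree in each variable is bounded by $\ell - 2 < h - 2$. So the low-order permutations will be invisible in exactly the rows/columns of coefficients that I care about, leaving only the top-order terms $\pi_1, \ldots, \pi_n$.

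For the first identity, I would fix $j \in \{0, 1, \ldots, k-2\}$ and read off the coefficient of $\alpha^{h-2}\beta^j$. By the observation above, the hypothesis collapses to $K^k_{h-2, j}\, \mathbf{b}_h^T \Cov{\omega}\, \mathbf{b}_{j+2} = 0$, and since $h \leq k$ and $j \leq k-2$ the scalar $K^k_{h-2, j}$ is nonzero. This gives $\mathbf{b}_h^T \Cov{\omega}\, \mathbf{b}_{j+2} = 0$ for every basis vector $\mathbf{b}_2, \ldots, \mathbf{b}_k$ of $\B$. To upgrade this to $\mathbf{b}_h^T \Cov{\omega} = 0$ as a row of $\R^k$, I would use that every $A_{\pi_s}$ is doubly stochastic: it satisfies $A_{\pi_s}\mathbf{j} = \mathbf{j}$, hence $\Cov{\omega}\, \mathbf{j} = \left(\sum_s t_s\right)\mathbf{j}$, and the orthogonality $\mathbf{b}_h^T \mathbf{j} = 0$ noted in the paper kills this last component. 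Since $\{\mathbf{j}, \mathbf{b}_2, \ldots, \mathbf{b}_k\}$ spans $\R^k$, the row $\mathbf{b}_h^T \Cov{\omega}$ vanishes identically.

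The second identity follows from the completely symmetric argument: fix the column index $i \in \{0, \ldots, k-2\}$, extract the coefficient of $\alpha^i \beta^{h-2}$ to conclude $\mathbf{b}_{i+2}^T \Cov{\omega}\, \mathbf{b}_h = 0$ for all $i$, and then combine with $\mathbf{j}^T \Cov{\omega} = \left(\sum_s t_s\right)\mathbf{j}^T$ (again from double stochasticity) and $\mathbf{j}^T \mathbf{b}_h = 0$ to conclude $\Cov{\omega}\, \mathbf{b}_h = 0$ on the full basis $\{\mathbf{j}, \mathbf{b}_2, \ldots, \mathbf{b}_k\}$. The only real subtlety is careful bookkeeping of which coefficients vanish for permutations of order at most $h-1$, which is precisely what lets me isolate the order-$k$ part $\omega$ from the full dependency; everything else is mechanical once the coefficient lemma and the basis $\{\mathbf{j}, \mathbf{b}_2, \ldots, \mathbf{b}_k\}$ are in hand.
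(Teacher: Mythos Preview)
Your proposal is correct and follows essentially the same approach as the paper: both extract the coefficients $c_{i,h-2}$ (resp.\ $c_{h-2,j}$) from the relation $\sum t_\ell P_{\pi_\ell}=0$, use that these vanish for the low-order permutations, and cancel the nonzero scalar $K_{i,h-2}$ to obtain $\mathbf{b}_{i+2}^T\Cov{\omega}\,\mathbf{b}_h=0$ for all $i$. The only difference is cosmetic: to pass from ``orthogonal to every $\mathbf{b}_i$'' to ``zero vector'', the paper shows all entries of $\Cov{\omega}\,\mathbf{b}_h$ are equal via $\pm1$ vectors in $\B$ and then uses the constant column sums, whereas you invoke $\Cov{\omega}\,\mathbf{j}=(\sum t_s)\mathbf{j}\perp\mathbf{b}_h$ and the basis $\{\mathbf{j},\mathbf{b}_2,\ldots,\mathbf{b}_k\}$ directly---the same ingredient packaged a little more cleanly.
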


\begin{proof}
By Lemma \ref{lma:coef}, the coefficient $c_{i,j}(P_{\pi_\ell})$ is equal to zero for any $\ell > n$ whenever $i$ or $j$ is at least $h-2$.
Therefore, for any $0 \leq i \leq k-2$, we have
\begin{align*}
    0 = \sum_{\ell \in [m]} t_\ell c_{i, h-2}(P_{\pi_\ell}) = \sum_{\ell \in [n]} t_\ell K_{i,h-2} \left( \mathbf{b}_{i+2}^T \, A_{\pi_\ell} \, \mathbf{b}_{h} \right) = K_{i,h-2} \left( \mathbf{b}_{i+2}^T \, \Cov{\omega} \, \mathbf{b}_{h} \right).
\end{align*}
Since $K_{i,h-2}$ is non-zero, it also holds that
\begin{align}
    \label{eq:max-prod-zero}
    0 = \mathbf{b}_{i+2}^T \, \Cov{\omega} \, \mathbf{b}_{h}
\end{align}
implying
\begin{align}
\label{eq:max-prod-zero2}
v^T \, \Cov{\omega} \, \mathbf{b}_{h} = 0
\end{align}
for any $v\in \B$.
Recall that $\B$ is an orthogonal complement of $\mathbf{j}$. Since any vector $u$ that has one entry $-1$, one entry $+1$,
and all the other entries equal to zero belongs to $\B$, it holds ${u \, \Cov{\omega} \, \mathbf{b}_h = 0}$, i.e. $r_p(\Cov{\omega}) \, \mathbf{b}_{h} - r_q(\Cov{\omega}) \, \mathbf{b}_{h} = 0$ for any two rows $r_p(\Cov{\omega})$ and $r_q(\Cov{\omega})$ of matrix $\Cov{\omega}$. This implies $r_p(\Cov{\omega}) \, \mathbf{b}_{h} = r_q(\Cov{\omega}) \, \mathbf{b}_{h}$ for any $p,q \in [k]$.
Therefore, for any $p \in [k]$ the equality $r_p(\Cov{\omega})\, \mathbf{b}_{h} = 0$ holds since
\begin{align*}
    k \, r_p(\Cov{\omega})\, \mathbf{b}_{h}
    = \sum_{\ell \in [k]} r_p(\Cov{\omega})\, \mathbf{b}_{h}
    = \sum_{\ell \in [k]} r_\ell(\Cov{\omega})\, \mathbf{b}_{h}
    = (a, \ldots, a) \, \mathbf{b}_{h} = 0
\end{align*}
where $a$ is the common sum of all the columns.
The first equality of the lemma follows.
The other can be proven by a symmetric argument.
\end{proof}

If all the permutations in a dependent set have the same order, we can prove the following.
\begin{lemma}
\label{lma:const}
Let $\omega = t_1\pi_1 + \ldots + t_m \pi_m$ be a formal linear combination of $k$-permutations. If $\sum_{i \in [m]} t_iP_{\pi_i} = 0$, then the cover matrix $\Cov{\omega}$ is constant.
\end{lemma}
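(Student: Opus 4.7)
The plan is to leverage Lemma \ref{lma:zero-sums} directly, exploiting the simplification afforded by the assumption that every $\pi_i$ has the same order $k$. In that situation the index $n$ in the statement of Lemma \ref{lma:zero-sums} equals $m$ (every permutation is of the maximal order), so there are no remaining lower-order permutations and the condition on $h$ is vacuous beyond $2 \leq h \leq k$. For each such $h$, Lemma \ref{lma:zero-sums} then supplies the two identities
\begin{align*}
\Cov{\omega}\, \mathbf{b}_h = (0,\ldots,0)^T \quad \text{and} \quad \mathbf{b}_h^T\, \Cov{\omega} = (0,\ldots,0).
\end{align*}

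Next I would invoke the observation recorded just before Lemma \ref{lma:coef}, namely that the vectors $\mathbf{b}_2^k, \ldots, \mathbf{b}_k^k$ form a basis of $\B$, the orthogonal complement of $\mathbf{j} = (1,\ldots,1)^T$ in $\R^k$. Consequently, $\Cov{\omega}\, v = 0$ for every $v \in \B$, which means that each row of $\Cov{\omega}$ is orthogonal to $\B$ and therefore lies in $\spn\{\mathbf{j}\}$; in other words, each row of $\Cov{\omega}$ is a constant vector. The symmetric identities $\mathbf{b}_h^T\, \Cov{\omega} = 0$ for all $h \in \{2,\ldots,k\}$ show in exactly the same way that each column of $\Cov{\omega}$ is constant.

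Finally I would combine the two observations: if both every row and every column of a matrix are constant, then all entries coincide, hence $\Cov{\omega}$ equals a scalar multiple of the all-ones matrix and is in particular constant. I do not anticipate any substantive obstacle here, since the real computational work was already carried out inside the proof of Lemma \ref{lma:zero-sums}; the remaining effort consists only of assembling its conclusions across all admissible $h$ and translating ``orthogonal to $\B$'' into ``constant row/column''. The only mildly subtle point is confirming that Lemma \ref{lma:zero-sums} is being applied in the degenerate regime where its hypothesis on lower-order permutations is vacuous, which is immediate from the assumption that every $\pi_i$ is a $k$-permutation.
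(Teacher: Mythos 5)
Your proposal is correct and rests on essentially the same ingredients as the paper's proof: Lemma~\ref{lma:zero-sums} applied in the degenerate case $n=m$ (so every $h$ with $2\le h\le k$ is admissible), together with the fact that $\mathbf{j},\mathbf{b}_2,\ldots,\mathbf{b}_k$ form a basis of $\R^k$ with $\B=\mathbf{j}^\perp$. The only difference is in the final assembly — the paper packages the conclusion as a rank-one bilinear-form argument in the basis $\{\mathbf{j},\mathbf{b}_2,\ldots,\mathbf{b}_k\}$, whereas you read off directly from $\Cov{\omega}\,\mathbf{b}_h=0$ and $\mathbf{b}_h^T\,\Cov{\omega}=0$ that every row and every column lies in $\spn\{\mathbf{j}\}$ — and both finishes are valid.
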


\begin{proof}
We first bound the rank of $\Cov{\omega}$. Recall that the vectors $\mathbf{b}^k_{2}, \ldots, \mathbf{b}^k_{ k}$, and $\mathbf{j}$ form an~orthogonal basis of $\R^k$.
Call that basis $B$.
The matrix $\Cov{\omega}$ is a matrix of some bilinear functional $\phi:\R^k \times \R^k \rightarrow \R$ in the canonical basis.
Let us express the matrix of~the~functional $\phi$ in the basis $B$ by computing the values of $\phi$ on the pairs of basis vectors.
By Lemma \ref{lma:zero-sums}, for $2 \leq i, j \leq k$, it holds
$\mathbf{b}_{ i}^T \, \Cov{\omega} \, \mathbf{b}_{ j} = 0$
implying $\phi(\mathbf{b}_{i}, \mathbf{b}_{j}) = 0$.
By the definition of~a~cover matrix, the sum of any column or row of $\Cov{\omega}$ is equal to a constant $a = \sum_{\ell \in [m]} t_\ell$. Hence, it holds $\mathbf{j} \, \Cov{\omega} \, \mathbf{b}_{ j} = (a,\ldots, a) \, \mathbf{b}_{ j} = 0$ for any $2 \leq j \leq k$ and thus $\phi(\mathbf{j}, \mathbf{b}_{ j}) = 0$.
Similarly, it also holds $\phi(\mathbf{b}_{ i}, \mathbf{j}) = 0$ for any $2 \leq i \leq k$.

The rank of the matrix of $\phi$ in the basis $B$ is at most one since the only nonzero entry it could have is the one corresponding to the value $\phi(\mathbf{j}, \mathbf{j})$.
The change of the basis does not change the rank of the matrix of a functional, therefore, the rank of $\Cov{\omega}$ depends only on the value of $\phi(\mathbf{j}, \mathbf{j})$. In particular, it is either zero or one. If it is zero, then the matrix $\Cov{\omega}$ is the constant zero matrix. In the latter case, the columns of $\Cov{\omega}$ are multiple of each other and since they have constant non-zero sum, they are all equal. Similarly, all~the~rows of $\Cov{\omega}$ are equal. The fact that $\Cov{\omega}$ is constant follows.
\end{proof}


In the next lemma we prove that if there exists a formal linear combination of gradient polynomials equal to zero but having all coefficients non-zero, then it contains at least two permutation with the maximum order.

\begin{lemma}
\label{lma:max2} Let $\pi_1, \ldots, \pi_m$ be permutations of order at least two and suppose $|\pi_1| \geq \ldots \geq |\pi_m| $. If there exist non-zero reals $t_1, \ldots, t_m$ satisfying $\sum_{i \in [m]} t_i P_{\pi_i} = 0$, then $m \geq 2$ and $|\pi_1|=|\pi_2|$.
\end{lemma}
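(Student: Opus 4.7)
The plan is to argue by contradiction, assuming that the maximum order is attained by a unique permutation. This single assumption covers both possible failure modes simultaneously ($m = 1$, or $m \geq 2$ with $|\pi_2| < |\pi_1|$), so deriving a contradiction from it will yield both conclusions ($m \geq 2$ and $|\pi_1| = |\pi_2|$) at once.

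Set $k = |\pi_1| \geq 2$. Under the assumption above, every $\pi_i$ with $i \geq 2$ (if any) has order at most $k-1$. I would then apply Lemma~\ref{lma:zero-sums} with $n = 1$, $\omega = t_1 \pi_1$, and $h = k$: the hypothesis on the orders of the remaining permutations $\pi_2, \ldots, \pi_m$ is either vacuously true (when $m = 1$) or directly satisfied (when each has order at most $k-1 = h-1$). Since the condition $2 \leq h \leq k$ holds as $k \geq 2$, the lemma is applicable and yields
\[
\Cov{\omega} \, \mathbf{b}_k \;=\; t_1 \, A_{\pi_1} \, \mathbf{b}_k \;=\; 0.
\]

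To close the contradiction, I would invoke invertibility: $A_{\pi_1}$ is a permutation matrix, hence invertible, and $t_1 \neq 0$ by hypothesis, which would force $\mathbf{b}_k = 0$. But by the definition of $\mathbf{b}^k_k$, the first coordinate is $(\mathbf{b}^k_k)_1 = (-1)^0 \binom{k-1}{0} = 1$, so $\mathbf{b}_k \neq 0$, a contradiction. Hence the maximum order must be shared by at least two of the $\pi_i$, giving $m \geq 2$ and $|\pi_1| = |\pi_2|$.

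I do not anticipate any real obstacle: once Lemma~\ref{lma:zero-sums} is available, the entire argument amounts to the observation that an invertible matrix cannot annihilate the nonzero vector $\mathbf{b}_k$. The only mild care needed is the bookkeeping that lets the $m = 1$ case and the ``non-unique maximum'' case be handled by a single invocation of the lemma, which is precisely why phrasing the assumption as ``$\pi_1$ is the unique maximum-order permutation'' is convenient.
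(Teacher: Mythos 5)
Your proposal is correct and follows essentially the same route as the paper: assume for contradiction that $\pi_1$ is the unique maximum-order permutation, apply Lemma~\ref{lma:zero-sums} with $\omega = t_1\pi_1$ and $h=k$, and derive a contradiction from $t_1 A_{\pi_1}\mathbf{b}_k = 0$. The only (immaterial) difference is the last step: the paper observes that every entry of $\mathbf{b}_k$ is nonzero so each row product $t_1(\mathbf{b}_k)_{\pi_1(i)}$ is nonzero, whereas you invoke invertibility of $A_{\pi_1}$ together with $\mathbf{b}_k\neq 0$; both are valid.
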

\begin{proof}
Suppose for a contradiction that $\pi_1$ is the unique permutation among $\pi_1,\ldots, \pi_m$ with the largest order (in particular this holds if $m=1$). Then, the~cover matrix $\Cov{t_1 \pi_1} = t_1 A_{\pi_1}$ contains exactly one non-zero element in each row, and, therefore, the~product of any row with the vector $\mathbf{b}_k$ from Lemma \ref{lma:zero-sums} is non-zero. This contradicts Lemma~\ref{lma:zero-sums}. Hence, there is at least one permutation with order $|\pi_1|$ other than $\pi_1$. In particular $m \geq 2$.
\end{proof}

The next lemma combines Lemma \ref{lma:const} and Lemma \ref{lma:max2} to exclude most of the sets of two or three permutations of equal orders from being linearly dependent. 

\begin{lemma}
\label{lma:same-sizes}
Let $S$ be a linearly dependent set of permutations whose orders are larger than one. Then:
\item a) $S$ is not a singleton.
\item b) If $|S|=2$, then both permutations in $S$ have order two.
\item c) If $|S|=3$ and all permutations in $S$ have the same order, then their common order is three.
\end{lemma}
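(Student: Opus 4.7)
My plan is to treat (a), (b), (c) in sequence, using each to exclude ``zero-coefficient'' degeneracies in the next, and then to analyse the constant cover matrix supplied by Lemma~\ref{lma:const} via a pigeonhole count on the supports of the permutation matrices involved.

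Part (a) will follow at once from Lemma~\ref{lma:max2}: a linearly dependent singleton $\{\pi\}$ with $|\pi|\geq 2$ would be a non-trivial vanishing combination with $m=1$, which that lemma forbids.

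For part (b), I start with a non-trivial relation $t_1 P_{\pi_1} + t_2 P_{\pi_2} = 0$; if some $t_i$ vanishes, the other would give $P_{\pi_j}=0$ and a linearly dependent singleton, contradicting (a). So both $t_i$ are non-zero, Lemma~\ref{lma:max2} forces $|\pi_1|=|\pi_2|=k$, and Lemma~\ref{lma:const} makes $M := t_1 A_{\pi_1} + t_2 A_{\pi_2}$ constant with value $(t_1+t_2)/k$ (read off from the row sums). The decisive step is a counting argument: for $k\geq 3$ the supports of $A_{\pi_1}$ and $A_{\pi_2}$ together cover at most $2k<k^2$ cells, so $M$ has a zero entry and must be the zero matrix. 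Since $t_1,t_2\neq 0$, the pointwise identity $t_1 a + t_2 b = 0$ with $a,b\in\{0,1\}$ excludes $(1,0)$ and $(0,1)$, hence $A_{\pi_1}=A_{\pi_2}$ and $\pi_1=\pi_2$, a contradiction. Therefore $k=2$.

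For part (c), I proceed along the same lines. First reduce to a non-trivial relation $\sum_i t_i P_{\pi_i}=0$ with all three $t_i$ non-zero: any vanishing coefficient would produce a linearly dependent subset of size at most two with common order $k$, contradicting (a) or (b) (note that $k=2$ is already impossible for $|S|=3$, since only two permutations of order two exist). Lemma~\ref{lma:const} then makes $M := \sum_i t_i A_{\pi_i}$ constant with value $(t_1+t_2+t_3)/k$, and for $k\geq 4$ the bound $3k<k^2$ again forces a zero entry, hence $M=0$ and $t_1+t_2+t_3=0$. The main (mild) obstacle I foresee is the pointwise classification at each cell: since no pair sum $t_i+t_j$ can vanish (it would force the third coefficient to be zero), the only triples $(a_1,a_2,a_3)\in\{0,1\}^3$ with $\sum_\ell t_\ell a_\ell=0$ are $(0,0,0)$ and $(1,1,1)$. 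This yields $A_{\pi_1}=A_{\pi_2}=A_{\pi_3}$, contradicting $|S|=3$, so the only remaining possibility is $k=3$.
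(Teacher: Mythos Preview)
Your proof is correct and follows essentially the same skeleton as the paper: part (a) via Lemma~\ref{lma:max2}, then for (b) and (c) reduce to all coefficients non-zero and equal orders, apply Lemma~\ref{lma:const} to make the cover matrix constant, and use the support bound $mk<k^2$ (for $k>m$) to produce a zero entry. The only difference is in the finishing step: the paper observes directly that, since the $\pi_j$ are distinct, some index $i$ has a value $\pi_j(i)$ not shared by the other permutations, giving a non-zero entry $t_j$ in $\Cov{\omega}$ and an immediate contradiction with constancy; you instead conclude $M=0$ and run a cell-by-cell $\{0,1\}$ analysis to force all the $A_{\pi_i}$ to coincide. Both arguments are short and elementary; the paper's is a touch quicker, while yours is a bit more systematic and avoids the small case-check that no position is ``shared by all'' permutations.
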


\begin{proof}
Let $\pi_1, \ldots, \pi_m$ be permutations in $S$. By Lemma \ref{lma:max2}, it holds that ${m = |S| \geq 2}$. Since $S$ is linearly dependent, there exists a non-zero tuple of reals $(t_1, \ldots, t_m)$ such that $\sum_{i \in [m]} t_i P_i = 0$.
Let $\omega$ denote the formal linear combination $\sum_{i \in [m]} t_i \pi_i$.
First note that regardless whether $m=2$ or $3$, we may assume that all permutation in $S$ have the same order and all the coefficients $t_i, i \in [m],$ are non-zero. Indeed for $m=2$, both statements follows as a consequence of Lemma \ref{lma:max2}. For $m=3$, the equality of orders follows by the assumption of the Lemma. Furthermore, observe that if any of the coefficients $t_i$ was equal to zero, we would proceed as in the part b) and show that two of the permutations in $S$ have order two. By the assumption, the third should have the same order which is impossible since there are only two distinct permutations of order two.

If the order of permutations in $S$ is larger than $m$, then there exists a zero entry in the matrix $\Cov{\omega}$. Since $m$ is at most three, there exists $i\in [m]$ and $\pi_j\in S$ such that $\pi_j(i)$ differs from all the other permutations from $S$ evaluated at $i$, i.e., $\pi_j(i) \neq \pi_{j'}(i)$ for $j' \neq j$. Otherwise, all permutations would be identical. Hence, the matrix $\Cov{\omega}$ has a non-zero entry, specifically $\Cov{\omega}_{i, \pi_j(i)} = t_j$. In particular, the matrix $\Cov{\omega}$ is not constant, which contradicts Lemma \ref{lma:const}. Therefore, all the permutations in the set $S$ have order $m$ regardless whether $m=2$ or $m=3$.
\end{proof}

In the next lemma we exclude all the set of three permutation containing a ``large'' permutations from being linearly dependent.

\begin{lemma}
\label{lma:gt4}
Let $S = \{\pi_1, \pi_2, \pi_3\}$ be a linearly dependent set of non-trivial permutations. If $|\pi_1| \geq |\pi_2| \geq |\pi_3|$ and $|\pi_1| > 3$, then $|\pi_2| = |\pi_3| = 2$.
\end{lemma}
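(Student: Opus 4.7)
The plan is to write a non-trivial dependence $t_1 P_{\pi_1} + t_2 P_{\pi_2} + t_3 P_{\pi_3} = 0$ and split on whether all $t_i$ are nonzero. If some $t_i$ vanishes, Lemma~\ref{lma:same-sizes}(a) excludes the subcase of exactly one nonzero coefficient, so exactly two of them are nonzero and the corresponding two permutations form a linearly dependent pair. If $\pi_1$ belongs to that pair, Lemma~\ref{lma:same-sizes}(b) forces $|\pi_1|=2$, contradicting $|\pi_1|>3$. Hence $t_1=0$ and $\{\pi_2,\pi_3\}$ is dependent, so $|\pi_2|=|\pi_3|=2$ by Lemma~\ref{lma:same-sizes}(b), as required.

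So assume that $t_1,t_2,t_3$ are all nonzero. Lemma~\ref{lma:max2} gives $|\pi_1|=|\pi_2|=:k$, and Lemma~\ref{lma:same-sizes}(c) excludes $|\pi_3|=k$, so $h:=|\pi_3|<k$. Setting $M:=t_1 A_{\pi_1}+t_2 A_{\pi_2}$, Lemma~\ref{lma:zero-sums} gives $M\mathbf{b}_\ell=0$ and $\mathbf{b}_\ell^T M=0$ for every $\ell\in\{h+1,\ldots,k\}$. Reading $M\mathbf{b}_k=0$ row by row yields $(\mathbf{b}_k)_{\pi_2(i)}=-(t_1/t_2)(\mathbf{b}_k)_{\pi_1(i)}$ for each $i$; since every entry of $\mathbf{b}_k$ is nonzero and $i\mapsto\pi_1(i)$ is a bijection, comparing multisets forces $|t_1/t_2|=1$. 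A short sign/parity analysis based on the explicit entries $(\mathbf{b}_k)_j=(-1)^{j-1}\binom{k-1}{j-1}$ narrows this down to two possibilities: Case~(A), with $t_1=-t_2$, $k$ odd, and $\sigma:=\pi_2\pi_1^{-1}$ an involution that swaps some subset of the $(k-1)/2$ pairs $\{a,k+1-a\}\subset[k]$; and Case~(B), with $t_1=t_2$, $k$ even, and $\pi_2=\overline{\pi_1}$ where $\overline{\pi}(i):=k+1-\pi(i)$. If additionally $h\le k-2$, then $M\mathbf{b}_{k-1}=0$ already terminates both cases: in~(A) the odd parity of $k$ forces $\pi_2=\pi_1$, while in~(B) the coordinate $j=k$ of the resulting identity $(\mathbf{b}_{k-1})_j+(\mathbf{b}_{k-1})_{k+1-j}=0$ gives the impossibility $0+1\ne 0$.

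The remaining case $h=k-1$ is the main obstacle, and I would treat Cases~(A) and~(B) by qualitatively different methods. For Case~(B), I first verify directly from~(\ref{eq:poly-formula}) the identity $P_{\overline{\pi}}(\alpha,\beta)=-P_{\pi}(\alpha,1-\beta)$; substituting into the dependence and then applying $\beta\mapsto 1-\beta$ produces a second relation whose sum with the first cancels the $\pi_1$ contribution and yields $P_{\pi_3}(\alpha,\beta)+P_{\pi_3}(\alpha,1-\beta)=0$, i.e., $P_{\pi_3}=P_{\overline{\pi_3}}$. A bilinear-form argument along the lines of Lemma~\ref{lma:const} (the form given by $A_{\pi_3}-A_{\overline{\pi_3}}$ vanishes on the whole basis $\mathbf{j},\mathbf{b}_2,\ldots,\mathbf{b}_{k-1}$) then shows the map $\pi\mapsto P_\pi$ is injective, so $\pi_3=\overline{\pi_3}$, which would require $\pi_3(i)=k/2$ for every $i$, impossible. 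For Case~(A), let $r$ denote the number of pairs swapped by $\sigma$; then $D:=A_{\pi_1}-A_{\pi_2}$ has exactly $2r$ nonzero rows that pair up as negatives, so its rank equals $r$ and the bilinear values $\mathbf{b}_{i+2}^T D\mathbf{b}_{j+2}$ form a sum of $r$ rank-one outer products indexed by $(i,j)$. Substituting $c_{i,j}(P_{\pi_1}-P_{\pi_2})=K_{i,j}^k\mathbf{b}_{i+2}^T D\mathbf{b}_{j+2}$ into the dependence $P_{\pi_1}-P_{\pi_2}=-(t_3/t_1)P_{\pi_3}$ restricted to $i,j\le k-3$, and absorbing the scalar $K_{i,j}^k/K_{i,j}^{k-1}=k/((k-i-2)(k-j-2))$ into the factors of the outer products, I conclude that the $(k-2)\times(k-2)$ matrix $M_{\pi_3}:=\bigl((\mathbf{b}_{i+2}^{k-1})^T A_{\pi_3}\mathbf{b}_{j+2}^{k-1}\bigr)_{i,j\le k-3}$ has rank at most $r$. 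On the other hand, writing $M_{\pi_3}=B^T A_{\pi_3}B$ with $B$ the $(k-1)\times(k-2)$ matrix whose columns are $\mathbf{b}_2^{k-1},\ldots,\mathbf{b}_{k-1}^{k-1}$, a kernel chase using $A_{\pi_3}(\mathbf{j}^\perp)=\mathbf{j}^\perp$ and $\ker B^T=\spn(\mathbf{j})$ shows that $M_{\pi_3}$ has full rank $k-2$. Since $k$ is odd and $k>3$ we have $k\ge 5$, so the inequality $r\le(k-1)/2<k-2$ produces the required contradiction.
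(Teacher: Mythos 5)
Your proof is correct, and its first half coincides with the paper's: after disposing of vanishing coefficients via Lemma~\ref{lma:same-sizes} and reducing to $|\pi_1|=|\pi_2|=k>|\pi_3|=:h$ with all $t_i\neq0$, both arguments apply Lemma~\ref{lma:zero-sums} with $\mathbf{b}_k$ to obtain $|t_1|=|t_2|$ and with $\mathbf{b}_{k-1}$ to rule out $h\le k-2$, so that $h=k-1$. Your explicit parity classification into Case~(A) ($t_1=-t_2$, $k$ odd, $\pi_2\pi_1^{-1}$ an involution swapping pairs $\{a,k+1-a\}$) and Case~(B) ($t_1=t_2$, $k$ even, $\pi_2=\overline{\pi_1}$) is a sharper bookkeeping of the same information. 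The endgame is where you genuinely diverge. The paper treats both parities uniformly: it passes to the mirror polynomials $\PP_{\pi_i}$, uses $\Cov{t_1\pi_1+t_2\pi_2}=(-1)^k\mirror{\Cov{t_1\pi_1+t_2\pi_2}}$ to transfer a reflection symmetry onto $\pi_3$, namely $u^T A_{\pi_3} v=(-1)^k u^T\A_{\pi_3}v$ on $\B\times\B$, and refutes it with hand-picked test vectors. You instead run two different arguments: in Case~(B), the identity $P_{\overline{\pi}}(\alpha,\beta)=-P_\pi(\alpha,1-\beta)$ (which does check out against (\ref{eq:poly-formula})) cancels the $\pi_1$ contribution outright and reduces everything to the injectivity of $\pi\mapsto P_\pi$, itself immediate from Lemma~\ref{lma:coef} together with the fact that $\mathbf{j},\mathbf{b}_2,\ldots,\mathbf{b}_h$ is a basis --- a clean and reusable fact; in Case~(A) you compare ranks, bounding $\mathrm{rank}\,(B^T A_{\pi_3} B)$ above by $r\le(k-1)/2$ via the dependence (the diagonal rescaling by $K^k_{i,j}/K^{k-1}_{i,j}$ is rank-preserving, as you note) and below by $k-2$ via the kernel chase, with $(k-1)/2<k-2$ for $k\ge 5$. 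The rank comparison is the real novelty: it is more structural and arguably more robust than the paper's pointwise test-vector computation, at the cost of the case split and extra bookkeeping. I verified the individual claims --- $\mathrm{rank}(A_{\pi_1}-A_{\pi_2})=r$, the invertibility of $B^T A_{\pi_3} B$, and the two terminal contradictions for $h\le k-2$ --- and found no gaps.
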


\begin{proof}
Let $(t_1, t_2, t_3)$ be a non-zero tuple of reals such that $\sum_{i \in [3]} t_iP_{\pi_i} = 0$. If $t_1=0$, the statement follows from Lemma \ref{lma:same-sizes}. Hence, we can assume $t_1\neq0$.
By Lemma \ref{lma:max2}, the orders of $\pi_1$ and $\pi_2$ are equal and $t_2 \neq 0$. By assumption, the order of $\pi_1$ is at least four, therefore Lemma \ref{lma:same-sizes} implies that the orders of permutations in $S$ cannot be equal to each other, i.e., $|\pi_1| = |\pi_2| > |\pi_3|$.

Let $k$ denote the order of the permutations $\pi_1$ and $\pi_2$, and let $\omega$ denote the formal linear combination $t_1\pi_1 + t_2\pi_2$.
We first show that the absolute values of the coefficients $t_1$ and $t_2$ are equal.
Indeed, if there exists $i$ such that $\pi_1(i) = \pi_2(i)$, then the following holds by Lemma \ref{lma:zero-sums}
\[(t_1 + t_2) {k-1 \choose \pi_1(i)-1} (-1)^{\pi_1(i)-1} = r_i\left(\Cov{\omega}\right)\,\mathbf{b}_k = 0,\]
hence $t_1$ is equal to $-t_2$.
Otherwise, the values $\pi_1(i)$ and $\pi_2(i)$ are different for every $i \in [k]$.
In particular, we can without loss of generality assume $|t_1| \leq |t_2|$ and let $i$ be such that $\pi_1(i)$ equals one. Then, it holds that
\[t_1{k-1 \choose 0} + t_2{k-1 \choose \pi_2(i)-1}(-1)^{\pi_2(i)-1} = 0,\]
but that is possible only if $\pi_2(i)=k$ and $|t_1|=|t_2|$.

We next show that $|\pi_3| = k-1$. Suppose that this is not the case, i.e., $|\pi_3| < k - 1$. Let $i$ be such that $\pi_1(i) \neq \pi_2(i)$, i.e., the row $r_i(\Cov{\omega})$ contains exactly two non-zero entries.
By Lemma \ref{lma:zero-sums}, the following equalities hold
\begin{align*}
r_i(\Cov{\omega}) \, \mathbf{b}_{k} & = 0\\
r_i(\Cov{\omega}) \, \mathbf{b}_{k-1} & = 0.
\end{align*}
The first equality implies that $\pi_1(i) = k + 1 - \pi_2(i)$ since $|t_1|=|t_2|$, while the second implies that $\pi_1(i) = k - \pi_2(i)$ which is impossible.

Note that the equality $\sum_{i \in [3]}t_i P_{\pi_i} = 0$ holds if and only if the equality $\sum_{i \in [3]}t_i \PP_{\pi_i} = 0$ holds.
Lemma \ref{lma:zero-sums} yields that the cover matrix $\Cov{\omega}$ is symmetric up to the sign, i.e., ${\Cov{\omega}} = (-1)^{k}\mirror{\Cov{\omega}}$ (recall that $\Cov{\omega}$ has at most two nonzero entries in each column and $|t_1| = |t_2|$). Let $0 \leq i, j \leq k-3$. It follows that
\begin{align*}
&t_3 c_{i,j}(\PP_{\pi_3})\\
= &-t_2 c_{i,j}(\PP_{\pi_2}) - t_1 c_{i,j}(\PP_{\pi_1}) \\
= & K_{i,j} \left(\mathbf{b}_{ i+2}^T \, \mirror{\Cov{\omega}} \, \mathbf{b}_{ j+2} \right)\\
= & (-1)^{k} K_{i,j}\left(\mathbf{b}_{ i+2}^T \, \Cov{\omega} \, \mathbf{b}_{ j+2}\right)\\
= &(-1)^{k} t_2 c_{i,j}(P_{\pi_2}) + (-1)^{k} t_1 c_{i,j}(P_{\pi_1}) \\
= & (-1)^{k+1} t_3 c_{i, j}(P_{\pi_3}).
\end{align*}
Since $t_3$ is non-zero, it holds that
\begin{align*}
(\mathbf{b}^{|\pi_3|}_{ i+2})^T \, \A_{\pi_3} \, \mathbf{b}^{|\pi_3|}_{j+2} = - c_{i,j}(\PP_{\pi_3}) \left(K^{|\pi_3|}_{i,j}\right)^{-1}
= (-1)^{k} c_{i, j}(P_{\pi_3}) \left(K^{|\pi_3|}_{i,j}\right)^{-1} = (-1)^{k}(\mathbf{b}^{|\pi_3|}_{ i+2})^T \, A_{\pi_3} \, \mathbf{b}^{|\pi_3|}_{j+2}
\end{align*}
for any $0 \leq i,j \leq k-3$.
We conclude that the equality $u^T \, \A_{\pi_3} \, v = (-1)^{k} u^T \, A_{\pi_3} \, v$ holds
for any vectors $u,v \in \B$.
Choose $i,j$ such that $\pi_3(i) = 1$ and $\pi_3(j)=2$.
Define vectors $u,v \in \B$ by setting $u_i = v_1 = 1, u_j = v_2=-1$ and $u_k = v_k = 0$ for any other $k$. The product $u^T \, A_{\pi_3}\, v$ is equal to two but since $|\pi_3| = k-1 > 2$, the absolute value of the product $(-1)^{k} u^T \, \A_{\pi_3} \, v$ is at most one which is a contradiction.
\end{proof}

\begin{figure}[ht!]
      \includegraphics[clip, width=0.25\textwidth]{./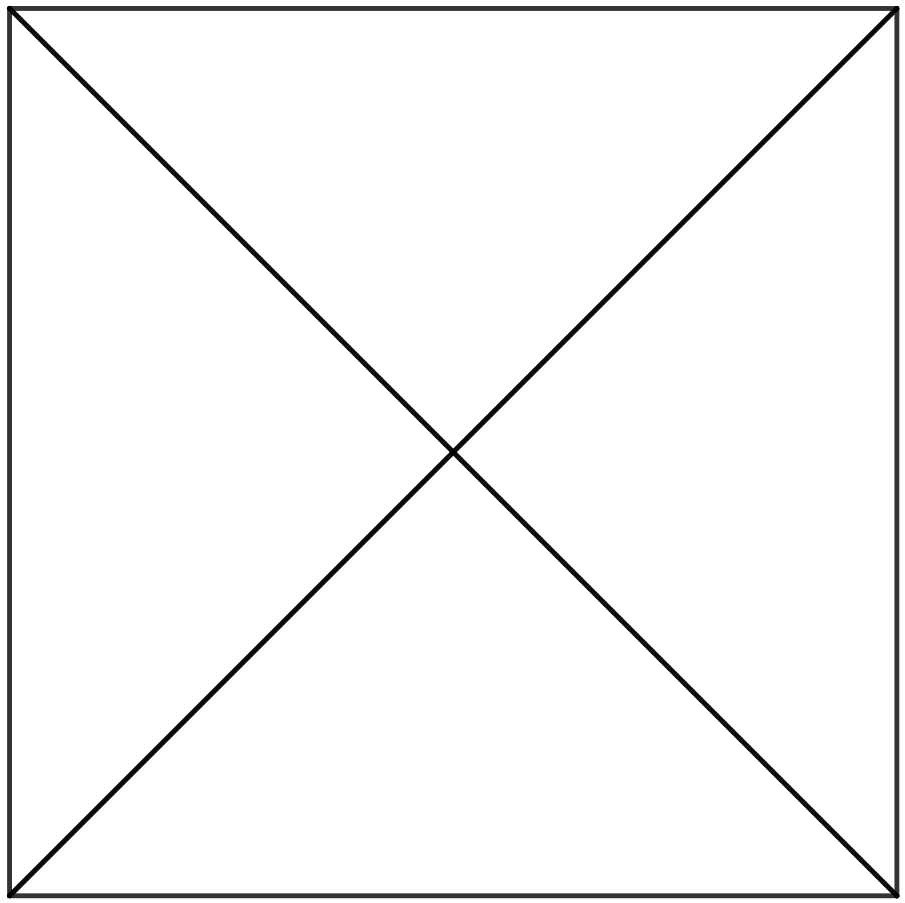}
\hspace{\fill}
      \includegraphics[clip, width=0.25\textwidth]{./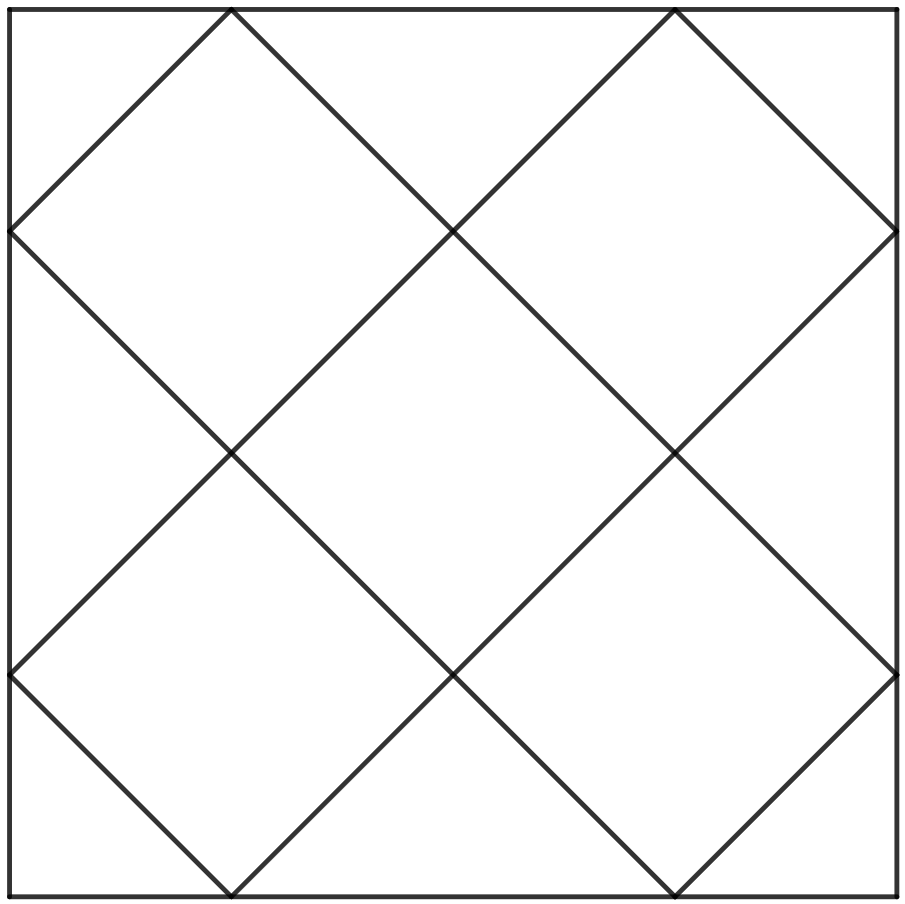}
\hspace{\fill}
      \includegraphics[clip, width=0.25\textwidth]{./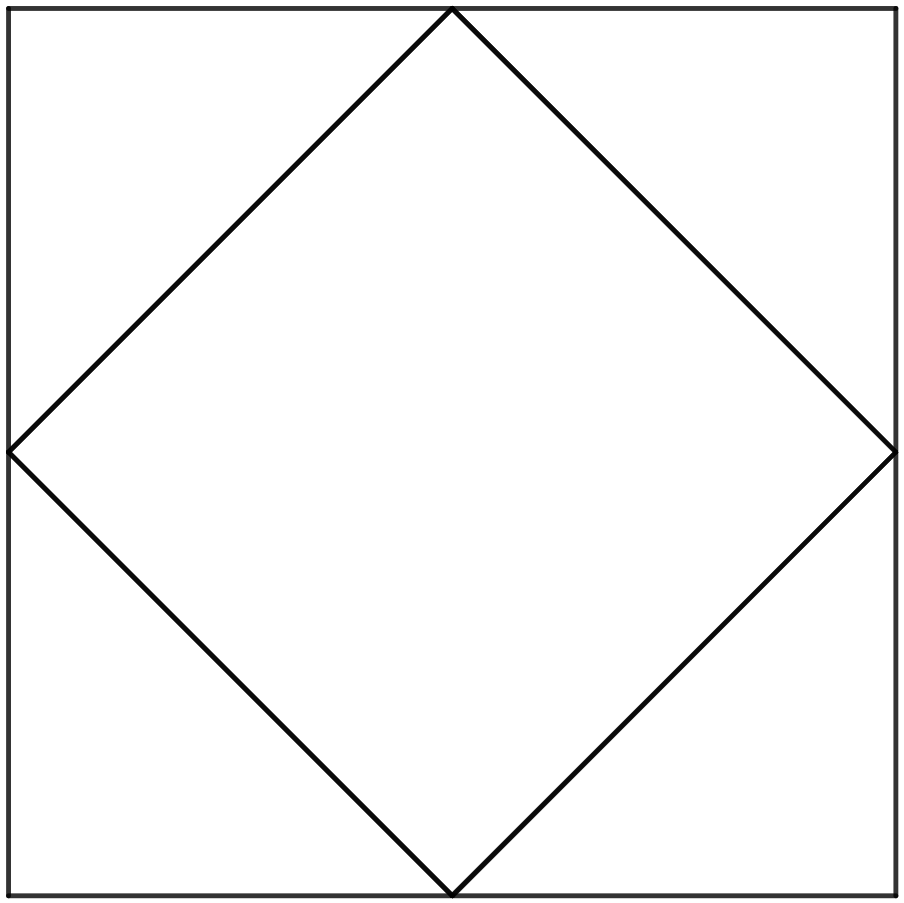}\\
\caption{\label{permutons}The sets $M_0$, $M_{0.5}$ and $M_1$}
\end{figure}

The next lemma provides the last ingredient to prove Theorem \ref{thm:main}. The lemma can be found for instance in \cite{KraP13} but we include a sketch of the proof for completeness.

\begin{lemma}
\label{lma:small}
There exists a non-uniform permuton $\mu$ such that for any $k$-permutation $\pi$ with $k < 4$ it holds that $d(\pi, \mu) = \frac{1}{|\pi|!}$.
\end{lemma}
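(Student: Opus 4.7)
The plan is to construct $\mu$ explicitly, exploiting the dihedral symmetry of the square. The group $D_4$ acts on $[0,1]^2$ by the eight obvious isometries, which induces an action on both permutons and permutations. Under this action the six $3$-permutations split into the orbits $\{123,321\}$ and $\{132,213,231,312\}$, while the two $2$-permutations form a single orbit. Hence for any $D_4$-invariant permuton $\nu$, the equalities $d(12,\nu)=d(21,\nu)=1/2$ are automatic, and, since the six $3$-permutation densities sum to one, the system $d(\pi,\nu)=1/6$ for every $3$-permutation $\pi$ collapses to the single scalar condition $d(123,\nu)=1/6$ (which then forces $d(132,\nu)=1/6$ as well).

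With this reduction in hand, I would consider a continuous one-parameter family of $D_4$-invariant permutons $\{\mu_t\}_{t\in[0,1]}$ supported on the sets $M_t$ depicted in Figure~\ref{permutons}. Each $M_t$ is a one-dimensional subset of $[0,1]^2$ built from four congruent curves interchanged by the symmetries of the square, and $\mu_t$ is arranged to be the normalized arc-length measure on $M_t$, with uniform marginals by design. As $t$ varies, the function $t \mapsto d(123,\mu_t)$ is continuous. At $t=0$ the support lies close to the main diagonal and its reflections through the centre, which forces $d(123,\mu_0)>1/6$; at $t=1$ the configuration is rotated so that the support lies close to the antidiagonal, forcing $d(123,\mu_1)<1/6$. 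The intermediate value theorem then furnishes some $t^{\star}\in(0,1)$ with $d(123,\mu_{t^{\star}})=1/6$, and we set $\mu := \mu_{t^{\star}}$. The resulting permuton is non-uniform because its support is one-dimensional and $\mu_{t^{\star}}$ is therefore singular with respect to Lebesgue measure, whereas the uniform permuton $\lambda$ is absolutely continuous.

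The main obstacle is specifying the family $\{M_t\}$ carefully enough to verify the sign of $d(123,\mu_t)-1/6$ at the two endpoints $t=0$ and $t=1$ (or, more economically, at any two values of $t$ on opposite sides of $1/6$). Once the family is pinned down, this amounts to a direct integration over ordered triples of points sampled from $M_t$ according to arc length; the $D_4$-symmetry collapses the integrand into only a handful of cases and makes the computation routine. All remaining ingredients---the orbit analysis, continuity of the density along the family, the intermediate value theorem, and the singularity argument that separates $\mu_{t^{\star}}$ from $\lambda$---are standard.
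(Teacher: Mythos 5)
Your proposal is correct and follows essentially the same route as the paper: the paper also takes the one-parameter family of symmetric permutons $\mu_\alpha$ supported on the sets $M_\alpha$ of Figure~\ref{permutons}, uses the reflection symmetries to reduce everything to the single condition $d(123,\mu_\alpha)=1/6$, and applies the intermediate value theorem between the endpoint values ($d(123,\mu_0)=1/4$ and $d(123,\mu_1)=1/8$ in the paper's computation, matching the signs you claim). The only differences are cosmetic: you invoke the full $D_4$ orbit structure where the paper uses horizontal and vertical reflections, and you make the non-uniformity argument (singularity of the support) explicit where the paper leaves it implicit.
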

For any $\alpha \in [0,1]$ define $M_\alpha$ to be the set of all the points $(x, y) \in [0, 1]^2$
such that $x+y \in  \{1-\frac{\alpha}{2}, 1+\frac{\alpha}{2}, \frac{\alpha}{2}, 2-\frac{\alpha}{2}\}$ or $y-x \in \{-\frac{\alpha}{2}, \frac{\alpha}{2}, 1-\frac{\alpha}{2}, \frac{\alpha}{2}-1\}$.
See the illustration in Figure~\ref{permutons}.
Let $\mu_{\alpha}$ be a permuton that is obtained by uniformly distributing the mass along~$M_\alpha$.
Note that $\mu_\alpha$ is invariant under horizontal and vertical reflection, and, therefore, the density of both $12$ and $21$ in $\mu_\alpha$ is equal to $1/2$ for any $\alpha$.
A simple calculation yields that $d(123, \mu_0) = 1/4$ and $d(123, \mu_1) = 1/8$. Since $d(123, \mu_{\alpha})$ is a continuous function there exists $\alpha_0 \in (0, 1)$ such that $d(123, \mu_{\alpha_0}) = 1/6$. The symmetries of the permuton imply that $d(123, \mu_{\alpha_0}) = d(321, \mu_{\alpha_0})$ and $d(132, \mu_{\alpha_0}) = d(312, \mu_{\alpha_0}) = d(213, \mu_{\alpha_0}) = d(231, \mu_{\alpha_0})$. In addition, the sum of these six densities is one, hence all six densities are equal to $1/6$.\\

We are finally ready to prove Theorem \ref{thm:main}:
\begin{proof}[Proof of Theorem \ref{thm:main}]
Let $S=\{\pi_1, \pi_2, \pi_3\}$ be a forcing set consisting of three permutations and suppose $|\pi_1| \geq |\pi_2| \geq |\pi_3|$. Note that we can assume without loss of generality that all the permutations have order at least two. Lemma \ref{lma:small} asserts that there is no forcing set of permutations of order at most three, hence we can further assume $|\pi_1| > 3$.

By Lemma \ref{lma:grad_poly}, the set $S$ is linearly dependent. Lemma \ref{lma:gt4} yields that the order of both permutations $\pi_2, \pi_3$ is two, i.e., $S=\{\pi_1, 12, 21\}$. Since $S$ is forcing, the set $\{\pi_1, 12 \}$ is also forcing. However, the set $\{\pi_1, 12 \}$ cannot be forcing by Lemma \ref{lma:grad_poly} and \ref{lma:max2}.
We conclude that there does not exist any forcing set consisting of three permutations.
\end{proof}





\section*{Acknowledgements}
The author is grateful to Jake Cooper for the careful reading of the manuscript. Special thanks are due to Dan Kr\'al' for overall guidance and the amount of time spent on consulting the problem.

\bibliography{main}{}
\bibliographystyle{plain}
\end{document}